\documentclass{article}
\usepackage{graphicx} 
\usepackage{amsmath}
\usepackage{amssymb}
\usepackage{amsthm}
\newtheorem{theorem}{Theorem}[section]
\newtheorem{lemma}[theorem]{Lemma}
\newtheorem{proposition}[theorem]{Proposition}
\newtheorem{claim}[theorem]{Claim}
\newtheorem{corollary}[theorem]{Corollary}

\theoremstyle{definition}
\newtheorem{example}[theorem]{Example}

\title{A sharp bound on the integrality gap in the $3$-set cover problem}
\author{Eli Berger\thanks{University of Haifa, \texttt{berger@math.haifa.ac.il}}\,\, and Ron Holzman\thanks{Technion--Israel Institute of Technology, \texttt{holzman@technion.ac.il}}}
\date{}

\begin{document}

\maketitle

\begin{abstract}
Given a hypergraph with edges of size at most $3$, the $3$-set cover problem asks to determine the minimum size of a family of edges which covers the vertex set. As the problem is NP-hard, it is natural to consider its fractional (linear programming) relaxation, which provides a lower bound on the value of the optimal solution. The ratio between the actual value and that of the fractional relaxation is called the integrality gap. A classic bound of Lov\'asz implies that the integrality gap in this problem is at most $11/6$. This has been improved to $5/3$ by Fujito and Okumura. Here we prove that the integrality gap is at most $3/2$, which is best possible. A corollary of this result is that the vertex set of any $3$-uniform, regular hypergraph on $n$ vertices can be covered by $n/2$ (or fewer) edges. This solves the $k=3$ case of a problem of de~A.~Moreira and Kohayakawa. As another application, we derive a certain variant of the Gale-Shapley stable marriage theorem for triples.
\end{abstract}

\section{Introduction}
A \emph{hypergraph} $H$ is a pair $(V,E)$, where $V=V(H)$ is a non-empty finite set of \emph{vertices} and $E=E(H)$ is a family of subsets of $V$ called \emph{edges}. We always assume that the edges are non-empty and every vertex belongs to at least one edge, so that $V=\bigcup E$. We often write edges without braces and commas, e.g., $123$ instead of $\{1,2,3\}$.

The \emph{rank} of $H$ is $\max_{e \in E(H)} |e|$. If $|e|=k$ for every $e \in E(H)$, we say that $H$ is $k$-\emph{uniform}. The \emph{degree} of a vertex $v$ is $|\{e \in E(H): v \in e\}|$. If every $v \in V(H)$ has the same degree $d$, we say that $H$ is $d$-\emph{regular}, or just \emph{regular} without specifying the degree.

An \emph{edge cover} in $H$ is a subfamily $E' \subseteq E(H)$ that covers the entire vertex set, i.e., $V=\bigcup E'$. We will be interested in edge covers that are as small as possible. The \emph{edge covering number} of $H$ is \[ \rho(H) = \min \{|E'|: E' \textrm{ is an edge cover in } H\}.\] The combinatorial optimization problem of determining $\rho(H)$ is usually called the (unweighted) set cover (or covering) problem. When restricted to hypergraphs of rank $k$, it is called the $k$-set cover problem. For $k=2$, i.e., in the case of graphs, computing the edge covering number is easy: it is equivalent to finding the maximum size of a matching. But already for $k=3$, the $3$-set cover problem is known to be NP-hard (e.g., by a reduction from the $3$-dimensional matching problem on Karp's list in~\cite{Ka}). There is a large body of combinatorial opimization literature devoted to approximating the edge covering number; see~\cite{CTF} for a survey.

An efficient lower bound on the edge covering number is obtained by considering its linear programming relaxation. A \emph{fractional edge cover} in $H$ is a function $f: E(H) \to \mathbb{R}_+$ that covers $V(H)$ in the sense that $\sum_{e \ni v} f(e) \ge 1$ for every vertex $v \in V(H)$. Its size $|f|$ is $\sum_{e \in E(H)} f(e)$. The \emph{fractional edge covering number} of $H$ is \[ \rho^*(H) = \min \{|f|: f \textrm{ is a fractional edge cover in } H\}.\] An edge cover is a $\{0,1\}$-valued fractional edge cover, hence $\rho(H) \ge \rho^*(H)$. The ratio $\rho(H)/\rho^*(H)$, measuring the relative distance between the integral solution and its fractional counterpart, is called the edge-cover integrality gap of $H$. An upper bound on the integrality gap over a class of hypergraphs allows to control the quality of the $\rho^*(H)$ approximation to $\rho(H)$ for $H$ in that class. The following classic upper bound due to Lov\'asz~\cite{Lo1} has had many applications in graph and hypergraph theory, combinatorial optimization, additive number theory, and more. (The original formulation of Lov\'asz's bound was in terms of the vertex-cover integrality gap for hypergraphs of a given maximum degree; the form we give here is equivalent by interchanging the roles of vertices and edges while keeping the incidence relation.)
\begin{theorem}[{\cite{Lo1}}] \label{thm:Lo1}
Let $H$ be a hypergraph of rank $k$. Then \[ \frac{\rho(H)}{\rho^*(H)} \le \sum_{i=1}^k \frac{1}{i} \le 1 + \ln k.\]
\end{theorem}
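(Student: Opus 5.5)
We prove the bound by analyzing the greedy algorithm with a dual-fitting (charging) argument. Assume first, as we may, that $H$ is closed under taking non-empty subsets of edges. Replacing an edge by a subset can only reduce what must be covered, and any cover using subsets yields a cover of the same size using the original supersets. Likewise any fractional cover of the closed hypergraph transfers to one of $H$ of the same size by moving each weight to a superset, so neither $\rho$ nor $\rho^*$ changes.

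Run the greedy algorithm. At each step, pick an edge $e$ covering the maximum number $m$ of still-uncovered vertices, and give each newly covered vertex $v$ the price $c(v)=1/m$. The number of edges chosen equals $\sum_{v\in V} c(v)$, so it suffices to show $\sum_v c(v)\le H_k\,|f|$ for every fractional edge cover $f$, where $H_k=\sum_{i=1}^k 1/i$.

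The key step is a per-edge bound: for every edge $e\in E(H)$,
\[ \sum_{v\in e} c(v)\le H_{|e|}\le H_k. \]
To see this, order the vertices of $e$ by the time they get covered, breaking ties arbitrarily, as $v_1,\dots,v_s$ with $s=|e|\le k$. Just before $v_j$ is covered, the vertices $v_j,\dots,v_s$ are all still uncovered. So $e$ itself covers at least $s-j+1$ uncovered vertices, and by the greedy choice $m\ge s-j+1$. Hence $c(v_j)\le 1/(s-j+1)$, and summing over $j$ gives $H_s$.

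Now take any fractional edge cover $f$. Then
\[ \rho(H)\le\sum_v c(v)\le \sum_v c(v)\sum_{e\ni v} f(e)=\sum_e f(e)\sum_{v\in e}c(v)\le H_k\,|f|. \]
Minimizing over $f$ gives $\rho(H)/\rho^*(H)\le H_k$. The second inequality of the statement, $H_k\le 1+\ln k$, is the standard comparison $\sum_{i=2}^k 1/i\le\int_1^k dx/x$.

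The only delicate point is the ordering argument in the per-edge bound: one has to check that the greedy maximum at the moment $v_j$ is covered really is at least the number of still-uncovered vertices of $e$. Ties, where several $v_j$ are covered in the same step, are harmless, because all of them receive the same price and that price is still bounded via the first of them in the order. The reduction to subset-closed $H$ is not actually needed for this argument. It is mentioned only because $f$ may be supported on edges that the greedy algorithm never selects.
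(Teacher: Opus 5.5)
The paper does not prove this theorem. It is quoted from Lov\'asz, in the dual edge-cover form, as background, so there is no internal proof to compare against. Judged on its own, your argument is correct and complete. It is the standard greedy analysis with per-vertex prices, i.e.\ Chv\'atal's dual-fitting formulation.

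The key per-edge bound $\sum_{v\in e}c(v)\le H_{|e|}$ is properly justified, including ties: at the step in which $v_j$ gets covered, all of $v_j,\dots,v_s$ are still uncovered, so the greedy maximum is at least $s-j+1$. This bound holds for every edge of $H$, whether or not greedy ever selects it. That is exactly why the subset-closure reduction is superfluous, and its stated motivation (edges in the support of $f$ that greedy never picks) is already handled. You can simply drop that paragraph.

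For comparison, the other standard proof, closer to Lov\'asz's original, also runs greedy but avoids prices. Let $U_j$ be the set of uncovered vertices at the first moment when no edge contains more than $j$ of them. Restricting an optimal fractional cover to $U_j$ gives $|U_j|\le j\,\rho^*(H)$. Let $a_i$ be the number of greedy steps that cover exactly $i$ new vertices. Since the greedy maxima are non-increasing, $|U_j|=\sum_{i\le j} i\,a_i$. Summation by parts then gives
\[ \sum_{i=1}^k a_i=\frac{|U_k|}{k}+\sum_{j=1}^{k-1}|U_j|\Bigl(\frac1j-\frac1{j+1}\Bigr)\le H_k\,\rho^*(H). \]
Both routes give the same constant. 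Your local charging version makes the LP duality explicit and extends verbatim to weighted set cover. The global counting version instead tracks how fast the uncovered set shrinks, which is how the bound is usually phrased for transversals in terms of maximum degree.
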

Examples for large $k$ are known that show that Lov\'asz's bound cannot be improved by a multiplicative constant (we cite below, in more precise form, such a result). However, Fujito and Okumura~\cite{FO} have improved the bound by an additive constant: \[ \frac{\rho(H)}{\rho^*(H)} \le \sum_{i=1}^k \frac{1}{i} - \frac{1}{6} \textrm{ \,\, for hypergraphs } H \textrm{ of rank } k \ge 2.\]
This improvement is significant for small values of $k$. For $k=2$ it improves the bound from $\frac{3}{2}$ to $\frac{4}{3}$, which is best possible. Indeed, the triangle graph $K_3$ has $\rho(K_3)=2$ and $\rho^*(K_3)=\frac{3}{2}$. For $k=3$ it improves the bound from $\frac{11}{6}$ to $\frac{5}{3}$. As far as we know, the $\frac{5}{3}$ bound is the best one known prior to this work. Here we focus on the $k=3$ case, and establish the following best possible improvement.
\begin{theorem} \label{thm:3/2}
Let $H$ be a hypergraph of rank $3$. Then \[ \frac{\rho(H)}{\rho^*(H)} \le \frac{3}{2}.\]
\end{theorem}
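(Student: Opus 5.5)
We would prove Theorem~\ref{thm:3/2} by taking a minimal counterexample and then combining LP duality with a structural description of optimal integral covers. First we reduce: we may assume $H$ is closed under taking subsets of edges, since replacing an edge by a subset does not change $\rho$ or $\rho^*$, and then every minimal edge cover can be chosen as a partition of $V$ into parts of size $1,2,3$. Next, take an optimal fractional cover $f$ together with an optimal dual, namely a fractional vertex packing $w:V\to\mathbb{R}_+$ with $\sum_{v\in e}w(v)\le 1$ for every edge and $\sum_v w(v)=\rho^*(H)$. It then suffices to find a partition of $V$ into edges of $H$ (of sizes $1,2,3$) using at most $\frac32\sum_v w(v)$ parts. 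Equivalently, we need a cover $C$ with $|C|\le \frac32 \rho^*$; by complementary slackness we may restrict attention to the tight edges of $w$ (those with $w(e)=1$) together with all subsets of them.

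The plan is to begin with a maximum-size partition into triples, or more generally a cover built from a maximum collection of disjoint $3$-edges, and to finish with an optimal graph edge cover (Gallai--Edmonds structure) on the leftover vertices. The triangle example shows where the $3/2$ comes from: an odd component of the leftover graph costs an extra half. For the accounting we would assign to each part $P$ of the partition a charge of $\frac32 w(P)$ and show that the total charge is at least the number of parts. Parts of size $3$ with $w(P)\ge 2/3$ pay for themselves. The trouble comes from parts with small weight, and these have to be compensated by local exchanges. We would set up an alternating-path or augmentation argument in the spirit of Hurkens--Schrijver local search: if a cheap part cannot be paid for, then some bounded swap (replace $t$ parts by $t-1$ parts, or trade $2$-edges for $3$-edges) reduces the cover size. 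That would contradict minimality, or contradict the choice of a lexicographically optimal partition (first minimize the number of parts, then maximize the number of triples, then maximize the number of pairs).

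Concretely, the key steps in order are: (1) the reduction to down-closed $H$ and to partitions; (2) the choice of an extremal optimal partition $\mathcal{P}$ and of the dual $w$; (3) a discharging lemma, which states that the weight $w$ can be redistributed so that every part of $\mathcal{P}$ receives at least $2/3$, possibly by moving weight along edges of $H$ between parts, using the fact that no improving swap exists; (4) summing to get $|\mathcal{P}|\le\frac32\sum w=\frac32\rho^*$. Alternatively, and perhaps more robustly, we could argue by minimal counterexample: deleting a well-chosen edge $e$ together with its vertices gives $\rho(H)\le 1+\rho(H-e)$ and $\rho^*(H-e)\le\rho^*(H)-\frac23$ whenever $e$ has $f(e)$ large enough. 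We would then handle separately the situation where every edge has small $f$-value, which forces a highly regular, fractional structure.

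The main obstacle is step (3), equivalently the case where $f$ is spread thinly. Here one must exhibit a cover of size $\le\frac32\rho^*$ without any edge that is locally heavy. We expect this to require a genuine structural analysis of how singletons and pairs in $\mathcal{P}$ interact with the triples. The concern is sharpness: two triangles sharing structure already achieve $3/2$, so the charging must lose nothing at the odd components. We would first try to prove the following: for each singleton or pair part, the edges of $H$ meeting it lead, via the extremality of $\mathcal{P}$, to triples that carry surplus weight above $2/3$ which can be transferred to the deficient part.
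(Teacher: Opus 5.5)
\textbf{There is a genuine gap.} Everything rests on your step (3), which you leave unproved and for which you give no mechanism.

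If the redistribution of $w$ is unconstrained, step (3) is literally the inequality $|\mathcal P|\le\frac32\sum_v w(v)$, i.e.\ the theorem itself. If weight may only move along edges of $H$, it is a stronger claim, and also unproved.

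Steps (1)--(2) make no progress on it. Take $H$ $3$-uniform and regular, made down-closed. Then $w\equiv\frac13$ is an optimal dual, and your charges are $\frac32,1,\frac12$ for triples, pairs and singletons. Step (3) becomes exactly ``some optimal partition has at least as many triples as singletons'', which is the whole of Corollary~\ref{cor:Reg}.

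Your fallback induction stalls in the same place. Deleting an edge with $f(e)\ge\frac23$ is fine, but the residual ``thin'' case is the entire problem. All the sharp examples ($K^{(3)}_4$, $TPP(2)$, Example~\ref{ex:3}) have a unique optimal fractional cover, with all values at most $\frac12$, so the heavy-edge step never applies to them.

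Nor do you explain why failure of the charging should produce an improving swap. Restricting to \emph{bounded} swaps is hopeless in general: already for graphs (rank $2$), improving an edge cover can require re-routing along an arbitrarily long augmenting path. The half-unit to be saved is a parity defect of an odd component, not a local feature. Also, Hurkens--Schrijver-type local-search analyses bound a local optimum against an optimal integral solution, not against the LP value.

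The missing idea is to organize the argument around the shadow graph $G_H=(V,E_2\cup\partial E_3)$ and its odd components.
\begin{itemize}
\item For $3$-uniform $H$ one has $\frac32|f|=\frac12\sum_v\sum_{e\ni v}f(e)=\frac12\bigl(|V|+|f|_+\bigr)$, so the target is $\rho(H)\le\frac12(|V|+|f|_+)$.
\item A maximum matching of $G_H$, plus one extra edge for each unmatched Gallai--Edmonds component $D_i$, costs exactly $\frac12$ above $\frac{|V|}{2}$ per such component.
\item Using Ore's deficiency form of Hall's theorem and Mendelsohn--Dulmage, the paper shows that at most $|f|_+$ unmatched components are ``heavy'', meaning $|f_i|_-+|f_i|_+\ge 1$.
\item It saves $\frac12$ on every ``light'' unmatched component via Proposition~\ref{pro:Cover}, applied in the relaxed form $|f_i|_-+|f_i|_+<1$. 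The proposition says: if $G_H$ is factor-critical and there is an exact cover with $|f|_-<1$, then $\rho(H)\le\frac{|V|-1}{2}$.
\end{itemize}
That proposition is the real core. It is proved in two stages. First, a minimal counterexample is used to grow a factor-critical spanning subgraph from $\partial E_3$ and the $2$-edges at a vertex $x$ maximizing $|f|_{v^-}$, adding $2$-edges one at a time under Gallai--Edmonds control (Lemma~\ref{lem:Cons}). Second, an induction contracts induced odd cycles whose edges lie in $\partial E_3$ (Claim~\ref{cl:Out}, Lemma~\ref{lem:G!H}), producing one triple plus disjoint shadow edges covering all remaining vertices. Your plan has no counterpart of this parity-fixing step, and that step is precisely what your step (3) would need to supply.
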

Before showing the sharpness of this bound, we state two increasingly special cases of Theorem~\ref{thm:3/2}. A fractional edge cover $f: E(H) \to \mathbb{R}_+$ is \emph{exact} if it satisfies the covering constraints as equalities: $\sum_{e \ni v} f(e) = 1$ for all $v \in V(H)$. Suppose that $H$ is a $3$-uniform hypergraph with $|V|=n$, and $f$ is a fractional edge cover in $H$. Then \[ |f| = \frac{1}{3} \sum_{v \in V(H)} \sum_{e \ni v} f(e) \ge \frac{1}{3} \sum_{v \in V(H)} 1 = \frac{n}{3},\] and equality holds if and only if $f$ is exact. Thus, if $H$ has an exact fractional edge cover, then $\rho^*(H) = \frac{n}{3}$. Therefore, Theorem~\ref{thm:3/2} immediately gives the following.
\begin{corollary} \label{cor:Ex}
Let $H$ be a $3$-uniform hypergraph with $|V(H)|=n$ that has an exact fractional edge cover. Then $\rho(H) \le \frac{n}{2}$.
\end{corollary}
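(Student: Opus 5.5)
This corollary follows from Theorem~\ref{thm:3/2}, which we may assume, together with the computation that precedes the statement. The plan has three short steps.

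First, we show $\rho^*(H) = \frac{n}{3}$. Let $g$ be an exact fractional edge cover of $H$; it is in particular a fractional edge cover, so $\rho^*(H) \le |g|$. For any fractional edge cover $f$, double counting vertex--edge incidences and using $|e|=3$ for every edge (this is where $3$-uniformity enters) gives
\[ |f| = \sum_{e} f(e) = \frac{1}{3}\sum_{e} |e| f(e) = \frac{1}{3}\sum_{v \in V(H)} \sum_{e \ni v} f(e) \ge \frac{n}{3}. \]
Applied to $g$, every inner sum equals $1$, so $|g| = \frac{n}{3}$. Hence
\[ \frac{n}{3} \le \rho^*(H) \le |g| = \frac{n}{3}, \]
so $\rho^*(H) = \frac{n}{3}$.

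Second, since $H$ is $3$-uniform, it has rank $3$, and Theorem~\ref{thm:3/2} applies. It gives
\[ \rho(H) \le \tfrac{3}{2}\rho^*(H) = \tfrac{3}{2}\cdot \tfrac{n}{3} = \tfrac{n}{2}. \]

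There is no real obstacle in this argument. The only points needing care are that uniformity, not merely rank $3$, is what makes $\sum_e |e| f(e) = 3|f|$ hold, and that an exact cover is a feasible fractional cover, so the lower bound $\frac{n}{3}$ is attained. All the difficulty lies in Theorem~\ref{thm:3/2} itself.
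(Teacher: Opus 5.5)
Your proposal is correct and is exactly the paper's argument: the double-counting identity shows $\rho^*(H)=\frac{n}{3}$ when an exact fractional edge cover exists. Theorem~\ref{thm:3/2} then gives $\rho(H)\le \frac{3}{2}\cdot\frac{n}{3}=\frac{n}{2}$.
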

Next, suppose that $H$ is a $3$-uniform, $d$-regular hypergraph. Then $f: E(H) \to \mathbb{R}_+$ defined by $f(e) = \frac{1}{d}$ for all $e \in E(H)$ is an exact fractional edge cover in $H$. We get the following special case of Corollary~\ref{cor:Ex}.
\begin{corollary} \label{cor:Reg}
Let $H$ be a $3$-uniform, regular hypergraph with $|V(H)|=n$. Then $\rho(H) \le \frac{n}{2}$.
\end{corollary}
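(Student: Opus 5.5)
This follows directly from Corollary~\ref{cor:Ex}, so the plan is to check its hypotheses and invoke it, with no new combinatorial work. Let $H$ be $3$-uniform and $d$-regular on $n$ vertices. First observe that $d \ge 1$. This holds because we always assume every vertex lies in at least one edge, and $V(H)$ is non-empty. Hence the function $f: E(H) \to \mathbb{R}_+$ given by $f(e) = \frac{1}{d}$ for every edge $e$ is well defined.

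Next, verify that $f$ is an exact fractional edge cover. For any $v \in V(H)$, the number of edges containing $v$ is exactly $d$, by regularity. Therefore
\[ \sum_{e \ni v} f(e) = d \cdot \frac{1}{d} = 1 .\]
So every covering constraint holds with equality. As a sanity check, $|f| = |E(H)|/d$, and double counting incidences gives $3|E(H)| = nd$. Hence $|f| = n/3$, consistent with the general identity $|f| = n/3$ for exact covers in $3$-uniform hypergraphs noted before Corollary~\ref{cor:Ex}.

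Finally, $H$ is a $3$-uniform hypergraph on $n$ vertices that has an exact fractional edge cover, so Corollary~\ref{cor:Ex} applies and gives $\rho(H) \le \frac{n}{2}$. There is no real obstacle here. All of the difficulty lies in Theorem~\ref{thm:3/2}, which Corollary~\ref{cor:Ex} inherits through $\rho^*(H) = n/3$. The only point needing a remark is that $d \ge 1$, which ensures $1/d$ makes sense.
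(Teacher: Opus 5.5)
Your proposal is correct and matches the paper's argument exactly: the paper likewise takes $f(e)=\frac{1}{d}$ on every edge, notes that it is an exact fractional edge cover, and invokes Corollary~\ref{cor:Ex}. Your added remark that $d \ge 1$ (since $V$ is non-empty and every vertex lies in some edge) is a harmless and valid detail the paper leaves implicit.
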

We proceed to give several examples of $3$-uniform, regular hypergraphs that attain equality in Corollary~\ref{cor:Reg}. This will show that Theorem~\ref{thm:3/2} and its two corollaries are sharp.
\begin{example}[$K^{(3)}_4$] \label{ex:1}
The complete $3$-uniform hypergraph on $4$ vertices is also $3$-regular. Two edges are needed to cover all vertices. Thus $\rho(K^{(3)}_4) = 2 = \frac{4}{2}$.
\end{example}
\begin{example}[$TPP(2)$] \label{ex:2}
The truncated projective plane of order $2$ has $6$ vertices and $4$ edges: $135$, $146$, $236$, $245$. It is $2$-regular, and three edges are needed to cover all vertices. Thus $\rho(TPP(2)) = 3 = \frac{6}{2}$. Note that this hypergraph is $3$-partite (with parts $\{1,2\}$, $\{3,4\}$, $\{5,6\}$), showing that the upper bound is sharp even in the $3$-partite case.
\end{example}
Examples satisfying $\rho(H) = \frac{n}{2}$ for arbitrarily large $n$ can be built from any of the above small examples by taking disjoint copies. The following construction gives arbitrarily large connected examples of equality.
\begin{example} \label{ex:3}
Given an even $n \ge 4$, we take a set $V$ of $n$ vertices and partition it into pairs $P_1, P_2, \ldots P_k$, where $k = \frac{n}{2}$ and the pairs are arranged cyclically (so $P_{k+1} = P_1$). We take as edges in $E$ all triples of the form $P_i \cup \{x\}$ where $x \in P_{i+1}$. Then $H=(V,E)$ is $3$-uniform and $3$-regular (in the case $n=4$ we get back Example~\ref{ex:1}). An edge cover of size $k$ can be formed, for example, by taking one triple containing $P_i$ for each $i=1, \ldots ,k$. We claim that $H$ has no smaller edge cover. Indeed, let $E'$ be an edge cover. We associate with $E'$ a cyclic string of length $k$, where the $i$-th entry is $0$, $1$, or $2$ indicating the number of triples in $E'$ that contain $P_i$. The predecessor (in the cyclic sense) of any $0$ entry must be a $2$ in order to satisfy the covering constraints. Hence there must be at least as many $2$ entries as there are $0$ entries, implying that the sum of entries is at least $k$. Thus $|E'| \ge k$ and so $\rho(H) = k = \frac{n}{2}$.
\end{example}
In a related paper, de~A.~Moreira and Kohayakawa~\cite{dK} studied the ratio $\rho(H)/\rho^*(H)$ for hypergraphs that are uniform and regular. They defined the function \[ f(k) = \limsup_{n \to \infty} \max_H \frac{\rho(H)}{\rho^*(H)},\] where the maximum is taken over all $k$-uniform, regular hypergraphs on $n$ vertices. Based on a probabilistic construction for the lower bound and on Lov\'asz's upper bound, they proved that for any large enough fixed $k$ \[ \ln k - 12 \ln\ln k \le f(k) \le 1 + \ln k.\] They showed that $f(2) = \frac{4}{3}$, and posed the open problem of determining $f(k)$ for all $k \ge 3$. Our Corollary~\ref{cor:Reg} together with the above examples of equality solve the case $k=3$ of their problem: $f(3) = \frac{3}{2}$.

The proof of Theorem~\ref{thm:3/2} is quite intricate, and relies on the Gallai-Edmonds decomposition of graphs and some structural properties of factor-critical graphs. Those are recalled or developed in Section~2, and the proof itself appears in Section~3. It is worth mentioning that the proof of Fujito and Okumura's~\cite{FO} previous improvement to Lov\'asz's bound, establishing the $\frac{5}{3}$ upper bound for $k=3$, also used the Gallai-Edmonds decomposition. However, we apply it to a different graph associated with the hypergraph of rank $3$, and use the decomposition in other ways than they did.

In Section~4 we present an application of Theorem~\ref{thm:3/2} to a Gale-Shapley type stable marriage problem, where we consider triples instead of couples.

\section{Preliminaries}
Although Theorem~\ref{thm:3/2} is about hypergraphs of rank $3$, our proof will mostly be in terms of graphs that are associated with such hypergraphs. Here we introduce some terminology and notation for graphs, and recall or develop the main tools that we will use in the proof.

A \emph{graph} $G=(V,E)$ is a $2$-uniform hypergraph. A \emph{matching} in $G$ is a set $M \subseteq E$ of pairwise disjoint edges. The set of vertices covered by the matching $M$ is $\bigcup M \subseteq V$. A \emph{maximum} matching $M$ is one that maximizes $|M|$ among all matchings in $G$. When $|V|$ is even, a \emph{perfect} matching $M$ is one with $\bigcup M = V$. When $|V|$ is odd, a \emph{near-perfect} matching $M$ is one with $|\bigcup M| = |V| - 1$.

Let $G=(V,E)$ be a graph, and let $U \subseteq V$. The \emph{induced subgraph} of $G$ on $U$ is the graph $G[U] = (U, E[U])$ where $E[U]$ consists of all edges in $E$ having both endpoints in $U$. When $U = V \setminus X$, we also write $G[U]$ as $G - X$. The \emph{contraction} of $X$ in $G$ is the graph $G/X$ obtained from $G - X$ by adding a new vertex $v_X$ and an edge $uv_X$ for every $u \in U$ such that $uv \in E$ for at least one $v \in X$.

Let $G$ be a graph, and let $O$ be an odd cycle in $G$. We say that $O$ is \emph{induced} if it has no chords. If $O$ is not induced, an \emph{odd shortening} of $O$ is a cycle $O'$ obtained from $O$ by partitioning its edge set into odd paths $Q_1, Q_2, \ldots , Q_k$, where $k \ge 3$ and each $Q_i$ is either an edge of $O$ or its endpoints form a chord, and replacing each of the latter $Q_i$ by the corresponding chord. Thus, the length of $O'$ is $k$, which is necessarily odd. We will need the following simple lemma.
\begin{lemma} \label{lem:Short}
Let $G$ be a graph, and let $O$ be an odd cycle in $G$. If $O$ is not induced, then it has an odd shortening $O'$ which is induced.
\end{lemma}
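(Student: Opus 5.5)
We argue by induction on the length of $O$, or equivalently by choosing a suitable minimal object. Let $\mathcal{S}$ be the set of all cycles that can be obtained from $O$ by an odd shortening. By hypothesis $O$ has a chord $xy$. The chord splits $O$ into two paths $P_1,P_2$ between $x$ and $y$ whose lengths add up to $|O|$, which is odd, so exactly one of them, say $P_1$, is odd. Partition the edge set of $O$ into the odd path $P_1$ (whose endpoints form the chord $xy$) together with the single edges of $P_2$. Since $P_2$ has even length at least $2$, there are $k = 1 + |P_2| \ge 3$ parts, all of them odd paths. Replacing $P_1$ by $xy$ gives an odd shortening. Hence $\mathcal{S}$ is nonempty, and we choose $O' \in \mathcal{S}$ of minimum length.

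Next we show that odd shortenings compose. Suppose $O'$ arises from $O$ via the partition $Q_1,\dots,Q_k$, and $O''$ arises from $O'$ via a partition $R_1,\dots,R_m$ of the edges of $O'$ into odd paths, with $m\ge 3$. Each edge of $O'$ corresponds to one $Q_i$, which is an odd path in $O$. Hence each $R_j$ corresponds to a concatenation of consecutive $Q_i$'s, which is a path in $O$ (a proper sub-path, since $m\ge 3$). Its length is a sum of an odd number of odd lengths, because $R_j$ has odd length. So the path is odd, and its endpoints are the endpoints of $R_j$. These endpoints are either joined by an edge of $O$ (when the path is a single edge) or form a chord of $O$: they are adjacent in $G$, and being non-consecutive on $O$ they are not joined by an edge of $O$. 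These concatenated paths partition the edges of $O$ into $m \ge 3$ odd paths, so $O''$ is an odd shortening of $O$ directly.

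Finally, if the minimal $O'$ were not induced, applying the first step to $O'$ would give an odd shortening $O''$ of $O'$ of length $k'' < |O'|$. By the composition step, $O''$ is an odd shortening of $O$, contradicting the minimality of $O'$. Therefore $O'$ is induced.

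The main care point is the composition step. One must check that the concatenated pieces of $O$ have endpoints that are genuinely either an edge of $O$ or a chord of $O$, and that the parity and the $k\ge3$ requirements are preserved; both follow from the counting above. Alternatively, the first step alone already lets us pick a chord whose odd side $P_1$ is minimal. That yields a simpler direct argument, but the minimality-plus-composition route is cleaner.
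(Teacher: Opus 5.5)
Your argument is correct and is essentially the paper's: take an odd shortening of minimum length and observe that a chord in it would yield a still shorter odd shortening of $O$. The composition property you verify explicitly is exactly what the paper uses without comment at that step.

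One caution about your closing aside: it is false, because no single-chord shortening need be induced. Take the $9$-cycle $v_0v_1\cdots v_8$ with chords $v_0v_5$ and $v_5v_8$. Each single-chord shortening still contains the other chord, and only the triangle $v_0v_5v_8$, which uses both chords, is induced. So the minimization really has to range over all odd shortenings, as in your main argument.
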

\begin{proof} Clearly, $O$ has an odd shortening: if $uv$ is a chord, we can use a partition $Q_1, Q_2, \ldots , Q_k$ consisting of the $u-v$ path on $O$ that is odd and single edges. Let $O'$ be an odd shortening with $k$ as small as possible. Then $O'$ is induced. Indeed, if $xy$ is a chord of $O'$, then replacing the $x-y$ path on $O'$ that is odd by the chord $xy$ yields a shorter odd shortening of $O$.
\end{proof}

\subsection{Factor-critical graphs}
A graph $G$ with an odd number of vertices is \emph{factor-critical} if for every $v \in V(G)$ there exists a near-perfect matching $M$ in $G$ such that $\bigcup M = V(G) \setminus \{v\}$. Factor-critical graphs will play a central role in our proof of Theorem~\ref{thm:3/2}. We will need the following lemma, showing that this property can be preserved under contraction.
\begin{lemma} \label{lem:Ind}
Let $G$ be a factor-critical graph with $|V(G)| > 1$. Then $G$ has an induced odd cycle $O$ such that $G/V(O)$ is factor-critical.
\end{lemma}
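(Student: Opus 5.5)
We will use Lovász's ear decomposition theorem for factor-critical graphs. It says that $G$ is factor-critical with $|V(G)|>1$ if and only if $G$ can be written as $G = O_0 + P_1 + \cdots + P_m$. Here $O_0$ is an odd cycle, and each $P_i$ is an odd ear: an odd path whose endpoints lie in the current graph and whose internal vertices are new, or an odd cycle meeting the current graph in exactly one vertex. We also use the standard fact that if $X\subseteq V(G)$ induces a factor-critical subgraph $G[X]$, then $G$ is factor-critical if and only if $G/X$ is factor-critical.

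We prove this fact first. Suppose $G/X$ is factor-critical and $v\in V(G)$. If $v\notin X$, take a near-perfect matching of $G/X$ missing $v$. It covers $v_X$ by an edge $uv_X$, which comes from some edge $uw$ with $w\in X$; complete it with a near-perfect matching of $G[X]$ missing $w$. If $v\in X$, take a matching of $G/X$ missing $v_X$ and add a matching of $G[X]$ missing $v$. Conversely, suppose $G$ is factor-critical, and fix a vertex of $G/X$ to be missed. Take a near-perfect matching $M$ of $G$ missing a suitable vertex: a vertex of $X$ if we want to miss $v_X$, and the desired vertex otherwise. Since $|X|$ is odd, a parity count shows that an odd number of edges of $M$ leave $X$. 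We then need a matching using exactly one edge out of $X$. This direction is more delicate, so we avoid needing it: we only use the "if'' direction.

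Now take an ear decomposition $G=O_0+P_1+\cdots+P_m$ and let $C=O_0$. Then $G/V(C)$ has the ear decomposition obtained by contracting $C$ to a point. The ears remain odd ears, with these caveats. An ear whose two endpoints were both on $C$ becomes an odd cycle through $v_C$. An ear of length $1$ with both endpoints on $C$ is a chord of $C$ and disappears. The first genuine ear becomes the starting odd cycle. If every ear disappears, then $G/V(C)$ is a single vertex, which is factor-critical. So $G/V(C)$ is factor-critical by the ear characterization, or directly by the "if'' direction above, since $C$ itself is factor-critical.

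The remaining problem is that $C$ need not be induced. Here we use Lemma~\ref{lem:Short}: if $C$ has chords, replace it by an induced odd shortening $O$. Then $V(O)\subseteq V(C)$, and we must show $G/V(O)$ is factor-critical. Since $C$ is an odd cycle with some vertices identified, the graph $G[V(C)]/V(O)$ consists of $v_O$ with the paths $Q_i$ that were replaced by chords hanging as closed walks. Each such $Q_i$ is an odd path with both endpoints in $V(O)$, so after contraction it becomes an odd cycle through $v_O$ (its interior vertices are distinct and not in $O$). Hence $G/V(O)$ has an ear decomposition: start from the first of these cycles through $v_O$, add the other cycles through $v_O$ as ears, then add the contracted images of $P_1,\dots,P_m$. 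Contracting $V(O)$ can turn an ear into a cycle or a loop/chord that is discarded, but never destroys oddness. If there are no such $Q_i$, we are in the previous case with $C=O$ induced.

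The main obstacle is the bookkeeping: checking that contracted ears remain valid odd ears. This includes verifying that internal vertices stay new, and handling ears whose endpoints both land in $V(O)$ (giving odd cycles through $v_O$, or disappearing if they have length $1$). The degenerate case where everything contracts to a single vertex also needs care. To sidestep subtleties with the ear characterization, we would instead verify factor-criticality directly by induction along the ear sequence, using the elementary fact that adding an odd ear to a factor-critical graph preserves factor-criticality.
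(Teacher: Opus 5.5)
Your main argument is correct and is essentially the paper's proof. You start from Lov\'asz's odd ear decomposition, replace the initial odd cycle $C$ by an induced odd shortening $O$ via Lemma~\ref{lem:Short}, and check that after contracting $V(O)$ the replaced paths $Q_i$ (of length at least $3$) become odd cycles through $v_O$, while every later ear stays an odd ear or vanishes. The only difference is cosmetic: the paper first re-roots the decomposition of $G$ at $O$ (adding the $Q_i$ as path ears and skipping chords already used) and then contracts, whereas you contract directly and read off the bouquet of odd cycles at $v_O$.

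One correction is needed. The ``standard fact'' you state is false in the direction you would actually need. Its ``if'' direction, which you proved, derives factor-criticality of $G$ from that of $G/X$, so it cannot give that $G/V(C)$ is factor-critical. The converse also fails. Take vertices $a,b,c,w,p_1,p_2,p_3$ and edges $ab,bc,ca,ap_1,bp_2,cp_3,wp_1,wp_2,wp_3$. This graph is factor-critical and $\{a,b,c\}$ induces a triangle, yet contracting $\{a,b,c\}$ yields $K_{2,3}$, which is not factor-critical. So delete the remark ``or directly by the `if' direction above''; the ear-decomposition argument alone carries the proof. The example also shows that $O$ must be taken from an ear decomposition, as you do, rather than chosen as an arbitrary induced odd cycle.
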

\begin{proof} We recall that an odd ear decomposition of a graph $G$ is its presentation as $P_0 + P_1 + \cdots + P_r$, where $P_0$ is a single vertex, and each $P_i$, $i=1, \ldots , r$, is either an odd path that has only its two endpoints in common with $P_0 + \cdots + P_{i-1}$ or an odd cycle that has exactly one vertex in common with $P_0 + \cdots + P_{i-1}$. Lov\'asz~\cite{Lo2} proved that a graph is factor-critical if and only if it has an odd ear decomposition.

Now, let $G$ be factor-critical with $|V(G)| > 1$, and let $P_0 + P_1 + \cdots + P_r$ be an odd ear decomposition of $G$. Since $P_0$ is a single vertex, $P_1$ must be an odd cycle through that vertex. We may assume that $P_1$ is induced. Indeed, if it is not then by Lemma~\ref{lem:Short} it has an odd shortening $P'_1$ which is induced. Say that $Q_1, Q_2, \ldots , Q_{\ell}$ are the corresponding odd paths on $P_1$ of length at least $3$ in the formation of $P'_1$. Then we can get another odd ear decomposition of $G$ as follows. We start with an arbitrary vertex on $P'_1$, then add $P'_1$, followed by the odd paths $Q_1, Q_2, \ldots , Q_{\ell}$. At this point, we have the entire cycle $P_1$ and its chords that appear in $P'_1$. From here on, we add $P_2, P_3, \ldots , P_r$ while skipping those chords of $P_1$ that already appeared in $P'_1$.

So, assume that $P_1$ is induced. Upon contracting $V(P_1)$ into a single vertex $v_{P_1}$, the graph becomes $v_{P_1} + P_2 + \cdots + P_r$, and each $P_i$, $i=2, \ldots , r$, is still of one of the two allowed forms (though odd paths may become odd cycles if the contraction collapses their endpoints into a single vertex). Thus $G/V(P_1)$ has an odd ear decomposition as well, and is therefore factor-critical as required.
\end{proof}

The next lemma allows us to decompose a factor-critical graph into an odd cycle of smaller factor-critical graphs.
\begin{lemma} \label{lem:Odd}
Let $G$ be a graph with $|V(G)| > 1$. Then $G$ is factor-critical if and only if for some positive integer $k$, there exists a partition $U_1, U_2, \ldots , U_{2k+1}$ of $V(G)$ so that:
\begin{itemize}
\item[(a)] $G[U_i]$ is factor-critical for all $i=1,2, \ldots , 2k+1$.
\item[(b)] For $1 \le i \ne j \le 2k+1$, $G$ has at least one edge between $U_i$ and $U_j$ if and only if $i, j$ are cyclically adjacent (i.e., $|i-j| = 1$ or $\{i,j\} = \{1, 2k+1\}$).
\end{itemize}
\end{lemma}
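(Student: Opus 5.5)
For the ``if'' direction I would exhibit near-perfect matchings directly. Given $v \in V(G)$, say $v \in U_i$, take a near-perfect matching of $G[U_i]$ missing $v$; this uses (a). The remaining $2k$ parts, in the cyclic order $U_{i+1}, \ldots, U_{i+2k}$, split into consecutive pairs $(U_{i+1},U_{i+2}), (U_{i+3},U_{i+4}), \ldots$. By (b), for each pair there is an edge $xy$ with $x \in U_{i+2j-1}$ and $y \in U_{i+2j}$. Use that edge, and add near-perfect matchings of $G[U_{i+2j-1}]$ missing $x$ and of $G[U_{i+2j}]$ missing $y$. Together these give a near-perfect matching of $G$ missing $v$. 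Note that only the ``if'' half of (b) is needed here, namely that cyclically adjacent parts are joined by an edge.

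For the ``only if'' direction I would use induction on $|V(G)|$. Apply Lemma~\ref{lem:Ind} to get an induced odd cycle $O$ with $G/V(O)$ factor-critical. I would aim to prove the stronger statement in which the partition exists with, additionally, each $U_i$ inducing a factor-critical graph. The idea is to grow the parts around the vertices of a cycle.

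\textbf{Base case.} If $G/V(O)$ is a single vertex, then $V(G)=V(O)$. Since $O$ is induced, $G=O$, and the singletons along $O$ give the partition with $2k+1 = |O| \ge 3$.

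\textbf{Inductive step.} Otherwise $G' = G/V(O)$ is factor-critical, has fewer vertices than $G$, and has more than one vertex. By induction it has a partition $W_1, \ldots, W_{2m+1}$ satisfying (a) and (b). Let $W_1$ be the part containing $v_O$. Replace $W_1$ by $U_1 = (W_1 \setminus \{v_O\}) \cup V(O)$ and keep the other parts unchanged.

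Condition (b) transfers because adjacency between parts is preserved under uncontracting. Parts not containing $v_O$ keep exactly their edges. An edge from $W_j$ to $v_O$ corresponds to an edge from $W_j$ to $V(O)$, and conversely.

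For (a) it remains to check that $G[U_1]$ is factor-critical, given that $G'[W_1]$ is factor-critical and $G'[W_1] = G[U_1]/V(O)$. I would prove a general claim: if $O$ is an odd cycle in $H$ and $H/V(O)$ is factor-critical, then $H$ is factor-critical.

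\textbf{Proof of the claim.} Take $v \in V(H)$.
\begin{itemize}
\item If $v \in V(O)$, take a near-perfect matching of $H/V(O)$ missing $v_O$. Extend it by a matching of $O - v$, which is a path with an even number of vertices.
\item If $v \notin V(O)$, take a near-perfect matching of $H/V(O)$ missing $v$. It covers $v_O$ by some edge $uv_O$, which lifts to an edge $uw$ with $w \in V(O)$. Complete the matching with a perfect matching of $O - w$.
\end{itemize}

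\textbf{Main obstacle.} A single-vertex $W_1 = \{v_O\}$ is harmless, since then $G[U_1] = O$, which is factor-critical. The real difficulty is the trivial case $2m+1 = 1$. This cannot happen, because the induction hypothesis always gives at least three parts when $|V(G')|>1$, so the stated form of the lemma closes the induction. The only point needing care is that the lifted version of (b) still has the ``only if'' half, namely no edges between non-adjacent parts. This holds because lifting edges does not create any new adjacency between parts.
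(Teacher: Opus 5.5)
Your proposal is correct and follows the paper's proof essentially step for step. For the ``if'' direction you use the same explicit construction of a near-perfect matching avoiding $v$. For the ``only if'' direction you use the same induction via Lemma~\ref{lem:Ind}: base case $G=O$, and an inductive step that replaces the part containing $v_O$ by $(W_1\setminus\{v_O\})\cup V(O)$ and lifts near-perfect matchings exactly as the paper does. The only quibble is cosmetic: the ``stronger statement'' you announce (each part inducing a factor-critical graph) is just condition (a) itself, so the induction hypothesis needs no strengthening.
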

\begin{proof} Assume first that a partition $U_1, U_2, \ldots , U_{2k+1}$ of $V(G)$ satisfying (a) and (b) is given. We have to show that for every $v \in V(G)$ there exists a perfect matching in $G - \{v\}$. Assume w.l.o.g.\ that $v = v_1 \in U_1$. For $i=1,2, \ldots , k$, let $v_{2i}v_{2i+1}$ be an edge of $G$ with endpoints $v_{2i} \in U_{2i}$, $v_{2i+1} \in U_{2i+1}$ (which exists by (b)). For $j=1,2, \ldots , 2k+1$, let $M_j$ be a perfect matching in $G[U_j] - \{v_j\}$ (which exists by (a)). These matchings, together with the edges $v_{2i}v_{2i+1}$, cover all the vertices except $v_1$.

To prove the opposite direction, we use induction on $|V(G)|$. By Lemma~\ref{lem:Ind}, we can find in $G$ an induced odd cycle $O$ such that $G/V(O)$ is factor-critical. If $G/V(O)$ is a single vertex, this means that $G=O$. In this case we can partition $V(G) = V(O)$ into singletons to satisfy the requirements in the lemma.

Thus, we may assume that $G/V(O)$ has more than one vertex. By the induction hypothesis, $G/V(O)$ admits a partition $U_1, U_2, \ldots , U_{2k+1}$ that satisfies the requirements. Let $v_O$ be the new vertex created by the contraction of $V(O)$, and assume w.l.o.g.\ that $v_O \in U_1$. Let $U^*_1 = \bigl( U_1 \setminus \{v_O\}\bigr) \cup V(O)$. We claim that $U^*_1, U_2, \ldots , U_{2k+1}$ is a partition of $V(G)$ that satisfies (a) and (b) with respect to $G$. The only part that requires an argument is the fact that $G[U^*_1]$ is factor-critical.

We need to show that for every $v \in U^*_1$, there is a matching $M$ in $G$ such that $\bigcup M = U^*_1 \setminus \{v\}$. There are two cases to check. If $v \in V(O)$ then we take a perfect matching in $O - \{v\}$ and a perfect matching in $G[U_1] - \{v_O\}$ (which exists because $\bigl( G/V(O)\bigr)[U_1]$ is factor-critical). Together they give the required matching $M$. Now suppose that $v \in U_1 \setminus \{v_O\}$. Then there is a perfect matching $M_1$ in $\bigl( G/V(O)\bigr)[U_1] - \{v\}$, and one of its edges is of the form $uv_O$ where $u \in U_1 \setminus \{v, v_O\}$. By the definition of contraction, $G$ has an edge $uw$ for some $w \in V(O)$. Replacing $uv_O$ in $M_1$ by $uw$, and adding a perfect matching in $O - \{w\}$, we get the required matching $M$.
\end{proof}

\subsection{The Gallai-Edmonds decomposition}
Gallai~\cite{Ga63, Ga64} and Edmonds~\cite{Ed} introduced a canonical decomposition that applies to any graph and can be used to describe the family of maximum matchings in it. We recall here the definition of this decomposition and some of its properties (see~\cite[Theorem 3.2.1]{LP}).

Let $G$ be a graph. We call a vertex $v \in V(G)$ \emph{dispensable} in $G$ if there is a maximum matching $M$ in $G$ with $v \notin \bigcup M$. We decompose $V(G)$ into three (possibly empty) parts as follows:
\begin{eqnarray*}
D & = & \{ v \in V(G): v \textrm{ is dispensable in } G\} \\
A & = & \{ u \in V(G) \setminus D: u \textrm{ is adjacent to at least one vertex in } D\} \\
C & = & V(G) \setminus (D \cup A)
\end{eqnarray*}
If $D \ne \emptyset$, we further decompose $D$ into the vertex sets $D_1, D_2, \ldots , D_r$ of the connected components of $G[D]$. If $D = \emptyset$ set $r=0$.
\begin{theorem}[{\cite{Ga63, Ga64, Ed}}] \label{thm:GE}
Let $G$ be a graph, and let $D = D_1 \cup D_2 \cup \cdots \cup D_r$, $A$, $C$ be the vertex sets defined above. Then:
\begin{itemize}
    \item[(a)] $G[D_i]$ is factor-critical for all $i=1,2, \ldots , r$.
    \item[(b)] There exists a matching in $G$ of size $|A|$, that matches the vertices in $A$ to vertices in pairwise distinct $D_j$'s. Moreover, given any $i \in \{1,2, \ldots , r\}$, such a matching exists that avoids $D_i$.
    \item[(c)] $G[C]$ has a perfect matching.
    \item[(d)] The family of all maximum matchings $M$ in $G$ can be constructed as follows. Choose any matching $M_A$ as described in part (b). For every $D_j$ that has a vertex covered by $M_A$, choose a near-perfect matching in $G[D_j]$ that avoids that vertex. For every other $D_i$ choose an arbitrary near-perfect matching in $G[D_i]$. Add to the above any perfect matching in $G[C]$. This gives a maximum matching $M$ in $G$ that covers all but $r - |A|$ vertices, one in each $D_i$ that is not touched by $M_A$.
\end{itemize}
\end{theorem}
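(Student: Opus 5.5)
The plan is to follow the classical route through Gallai's lemma and the Tutte--Berge formula, as in \cite[Ch.~3]{LP}. Write $\nu(G)$ for the size of a maximum matching and $\operatorname{def}(G)=|V(G)|-2\nu(G)$. The key steps, in order, are as follows.

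\emph{Step 1 (Gallai's lemma).} If $G$ is connected and every vertex is dispensable, then $G$ is factor-critical. I would argue by contradiction. Suppose some maximum matching $M$ misses two vertices $u,w$, chosen with $\operatorname{dist}(u,w)$ minimal over all such $M$ and pairs. Then $\operatorname{dist}(u,w)\ge 2$. Take a vertex $t$ on a shortest $u$--$w$ path. By minimality, every maximum matching missing $t$ covers both $u$ and $w$. Compare such a matching $N$ with $M$ via the symmetric difference $M\triangle N$. The alternating path starting at $t$ ends at most at one of $u,w$. Switching along it produces a maximum matching missing $t$ together with $u$ or $w$, which is closer to $t$. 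This contradicts the minimality.

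\emph{Step 2 (stability lemma).} For $a\in A$, we have $D(G-a)=D(G)$, $A(G-a)=A(G)\setminus\{a\}$, $C(G-a)=C(G)$, and $\nu(G-a)=\nu(G)-1$. The last equality holds because $a$ is covered by every maximum matching. The equality of the $D$-sets I would prove by alternating-path exchanges between a maximum matching of $G$ missing $d\in D$ and one of $G-a$. Iterating Step 2 over all of $A$ gives that $G-A$ has the same $D$ and $C$ as $G$, with $A(G-A)=\emptyset$. It follows that every component of $G[D]$ is a component of $G-A$ in which every vertex is dispensable. Step 1 then gives part (a). It also follows that $G[C]$, being a union of components of $G-A$ with no dispensable vertex and no neighbours in $D$, has a perfect matching, which gives part (c).

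\emph{Step 3 (the matching of $A$ into $D$ and the count).} Removing $A$ drops $\nu$ by exactly $|A|$, so $\nu(G)=\nu(G-A)+|A|=(|V|-|A|-r)/2+|A|$. By the easy direction of Tutte--Berge, $A$ certifies $\operatorname{def}(G)\ge r-|A|$, and this computation shows equality. Equality in the Tutte--Berge bound forces every maximum matching to do three things: match each vertex of $A$ into a distinct odd component $D_j$ (the $C$-part is even and perfectly matchable), contain a near-perfect matching of each $D_j$, and contain a perfect matching of $G[C]$. This gives the matching in (b), and one direction of (d). The converse direction of (d) is immediate: the described construction has exactly $|A|+\sum_j(|D_j|-1)/2+|C|/2=\nu(G)$ edges, and factor-criticality from (a) supplies the required near-perfect matchings avoiding any prescribed vertex. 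For the ``avoids $D_i$'' clause of (b), pick any vertex $d\in D_i$ and a maximum matching $M$ missing $d$, which exists since $d$ is dispensable. The structure just established shows that $M$ restricted to $A$ avoids $D_i$, because $D_i$ already contains an uncovered vertex and $M$ contains a near-perfect matching of $D_i$.

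I expect the main obstacle to be Step 2, the stability of $D$ under deleting a vertex of $A$. It needs a careful case analysis of alternating paths, in particular showing that no new dispensable vertices appear in $C$ or in $A$. My first attempt would be Hall-type reasoning combined with the Tutte--Berge formula. Alternatively, one could use the Edmonds blossom algorithm directly: on termination it yields the even/odd/unlabelled labelling, and this labelling coincides with $D$, $A$, $C$.
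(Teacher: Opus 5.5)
Your outline is the standard Lov\'asz--Plummer proof (Gallai's lemma, then the stability lemma, then the deficiency count). This is the source the paper cites; the paper itself does not prove this theorem. Step~1 is correct. The deductions of (a) and (c) from the stability lemma are correct: by the definition of $A$, every neighbour of $D$ outside $D$ lies in $A$, so each $G[D_i]$ is a component of $G-A$ and $G[C]$ is a union of components. Maximum matchings of $G-A$ therefore restrict to maximum matchings of these pieces. The counting in Step~3 correctly gives (b), both directions of (d), and the clause about avoiding $D_i$.

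The one real gap is the inclusion $D(G-a)\subseteq D(G)$ in Step~2, which you leave open. The exchange you describe, between a maximum matching of $G$ missing $d$ and one of $G-a$, only gives the easy inclusion $D(G)\subseteq D(G-a)$, where no exchange is needed: just delete the edge at $a$. For the reverse inclusion you have no maximum matching of $G$ missing $d$, since that is the goal. Moreover, the hypothesis $a\in A$, not merely $a\notin D$, must be used. For $a\in C$ the statement is false: in a single edge $ab$, deleting $a$ makes $b$ dispensable.

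The missing idea is to fix a neighbour $b\in D(G)$ of $a$. Let $d\in D(G-a)$ with $d\ne b$. Let $N$ be a maximum matching of $G-a$ missing $d$, so $|N|=\nu(G)-1$. Let $M$ be a maximum matching of $G$ missing $b$.
\begin{itemize}
\item If $N$ misses $b$, then $N+ab$ is a maximum matching of $G$ missing $d$.
\item Otherwise, let $R$ be the path component of $M\triangle N$ with endpoint $b$; it starts with an $N$-edge. It cannot end with an $N$-edge, since then $M\triangle R$ would be larger than $M$. It cannot end at $a$, since then $M\triangle R$ would be a maximum matching of $G$ missing $a\in A$.
\item If $R$ ends at $d$, then $M\triangle R$ is a maximum matching of $G$ missing $d$.
\item Otherwise $R$ avoids $a$ and $d$, because neither has an $N$-edge and so each could only be an endpoint of $R$. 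Then $N\triangle R$ is a maximum matching of $G-a$ missing $b$ and $d$, and $(N\triangle R)+ab$ is a maximum matching of $G$ missing $d$.
\end{itemize}
With this filled in, your argument proves the theorem.
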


We now list, for later use, a few immediate consequences of Theorem~\ref{thm:GE}. In each of them, we refer to the Gallai-Edmonds decomposition of $G$ as defined above.
\begin{corollary} \label{cor:NBR}
If $v \in D_i$ then any neighbor of $v$ in $G$ is either in $D_i$ or in $A$.
\end{corollary}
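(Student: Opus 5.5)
The claim is that every neighbor of a vertex $v \in D_i$ lies either in $D_i$ or in $A$. I would prove it by ruling out the other possible locations of a neighbor $u$ of $v$. Since $V(G) = D \cup A \cup C$ is a partition and $D = D_1 \cup \cdots \cup D_r$, the neighbor $u$ could a priori lie in $C$, in $A$, in $D_i$, or in some $D_j$ with $j \ne i$. So two cases must be excluded: $u \in C$, and $u \in D_j$ with $j \ne i$.

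The case $u \in C$ is immediate from the definitions. Suppose $u \in C$. Then $u \notin D$, and $u$ is adjacent to $v \in D$. By definition, such a $u$ belongs to $A$, which contradicts $u \in C = V(G) \setminus (D \cup A)$.

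For the case $u \in D_j$ with $j \ne i$, we have $u, v \in D$ and $uv \in E(G)$. Then $u$ and $v$ lie in the same connected component of $G[D]$. Since $D_1, \ldots, D_r$ are by definition the vertex sets of the components of $G[D]$, this forces $j = i$, a contradiction.

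Hence every neighbor of $v$ lies in $D_i \cup A$. The argument uses only the definitions of $D$, $A$, $C$ and of the components $D_j$; none of the deeper parts of Theorem~\ref{thm:GE} are needed. I expect no step to pose a real obstacle; the only point requiring care is to check that the case analysis covers all parts of the partition of $V(G)$.
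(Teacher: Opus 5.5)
Your proof is correct and is essentially the argument the paper intends; the paper lists this corollary as an immediate consequence of Theorem~\ref{thm:GE} without writing out a proof. A neighbor outside $D$ lies in $A$ by the definition of $A$, and a neighbor inside $D$ lies in the same component $D_i$ of $G[D]$; as you observe, only these definitions are needed, not the substantive parts (a)--(d) of Theorem~\ref{thm:GE}.
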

\begin{corollary} \label{cor:BP}
Suppose $D \ne \emptyset$. Consider the bipartite graph $\Gamma_G$ with $r$ vertices on one side (corresponding to $D_1, D_2, \ldots , D_r$) and the vertices of $A$ on the other side, where the $D_j$-vertex is adjacent to $u \in A$ if and only if at least one vertex in $D_j$ is adjacent to $u$ in $G$. Then $\Gamma_G$ has a matching that covers the $A$ side, and this matching can be chosen so that it avoids any given $D_i$-vertex. In particular, $|A| < r$.
\end{corollary}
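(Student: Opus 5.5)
This is an immediate translation of Theorem~\ref{thm:GE}(b) into the language of the contracted bipartite graph $\Gamma_G$. The plan is to take the matching promised there, read it off as a matching of $\Gamma_G$, and then obtain the strict inequality from the avoidance clause.

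Take a matching $M_A$ in $G$ as in Theorem~\ref{thm:GE}(b). It has $|A|$ edges, matches each $u \in A$ to a vertex $x_u \in D_{j(u)}$, and the indices $j(u)$ are pairwise distinct for distinct $u$. Send each edge $ux_u$ to the edge $\{u, D_{j(u)}\}$ of $\Gamma_G$. This is an edge of $\Gamma_G$, because $x_u \in D_{j(u)}$ is adjacent to $u$ in $G$. The images are pairwise disjoint: on the $A$ side because $M_A$ is a matching, and on the $D$ side because the indices $j(u)$ are distinct. So they form a matching of $\Gamma_G$ covering the $A$ side.

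For a prescribed index $i$, apply the ``moreover'' clause of Theorem~\ref{thm:GE}(b) to get $M_A$ avoiding $D_i$, which means no $j(u)$ equals $i$. The resulting matching of $\Gamma_G$ then avoids the $D_i$-vertex.

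For the inequality, fix any $i$, which is possible since $D \ne \emptyset$ gives $r \ge 1$. The matching just constructed injects $A$ into $\{D_1, \ldots, D_r\} \setminus \{D_i\}$, so $|A| \le r-1 < r$.

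The argument has no real obstacle. The only points needing care are checking that distinct $D_j$'s make the projected edges disjoint on the $D$ side, and that the hypothesis $D \ne \emptyset$ is exactly what allows us to choose an $i$ for the avoidance step.
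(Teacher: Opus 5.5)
Your proposal is correct and matches the paper's route: the paper lists this as an immediate consequence of Theorem~\ref{thm:GE}, and the intended argument is exactly yours. You project the matching from part (b) onto $\Gamma_G$, use its ``moreover'' clause to avoid the $D_i$-vertex, and deduce $|A| \le r-1$ from the resulting injection, using $r \ge 1$.
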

\begin{corollary} \label{cor:MM}
Suppose $v \in D_i$ and $M$ is a maximum matching in $G$ with $v \notin \bigcup M$. Then $M$ contains a perfect matching in $G[D_i] - \{v\}$.
\end{corollary}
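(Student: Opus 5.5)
The plan is to read Corollary~\ref{cor:MM} off Theorem~\ref{thm:GE}(d), using the other parts of Theorem~\ref{thm:GE} and Corollary~\ref{cor:NBR} to pin down what $M$ can do near $D_i$. The idea is a counting argument. By part (d), every maximum matching leaves exactly $r-|A|$ vertices uncovered. Since $G[D_i]$ is factor-critical (part (a)), $|D_i|$ is odd, so $M$ leaves at least one vertex of each $D_j$ uncovered unless some vertex of $D_j$ is matched outside $D_j$.

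First I handle edges leaving the $D_j$'s. By Corollary~\ref{cor:NBR}, any $M$-edge with exactly one endpoint in $D_j$ has its other endpoint in $A$. At most $|A|$ such edges exist, so at most $|A|$ components $D_j$ have a vertex matched out of $D_j$. Every other component has odd size and all its $M$-edges lie inside it, so it contains at least one uncovered vertex. This already gives at least $r-|A|$ uncovered vertices, and equality, which holds by maximality, forces the following. Each of the $|A|$ vertices of $A$ is matched by $M$ to a vertex of a distinct $D_j$. No two vertices of the same $D_j$ are matched outside $D_j$. Every component not touched in this way has exactly one uncovered vertex and is otherwise perfectly matched internally. (For this count I also use that $C$ has no $M$-edges to $D$, by the definition of $A$ and Corollary~\ref{cor:NBR}.)

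Now apply this to $D_i$ with $v\notin\bigcup M$. Suppose some vertex $w\in D_i$ were matched by $M$ to a vertex of $A$. Then $|D_i\setminus\{w\}|$ is even, and since $v$ is uncovered, the vertices of $D_i\setminus\{w\}$ covered by internal edges number at most $|D_i|-2$, which is odd. Hence at least one more vertex besides $v$ is uncovered or is matched outward. A second outward edge from $D_i$ is excluded by the equality above. An extra uncovered vertex makes the total exceed $r-|A|$, contradicting maximality. So $D_i$ is untouched by $M$-edges to $A$, and by the equality it has exactly one uncovered vertex, namely $v$, with all other vertices matched by edges inside $D_i$. These edges form a perfect matching of $G[D_i]-\{v\}$ contained in $M$.

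The only delicate step is the equality bookkeeping in the second paragraph: I need the precise number $r-|A|$ of uncovered vertices from Theorem~\ref{thm:GE}(d), and the observation that vertices of $C$ and $A$ cannot absorb vertices of $D$ except through edges to $A$.
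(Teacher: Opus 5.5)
Your proof is correct, but it takes a slightly different route from the one the paper intends. The paper gives no written proof; it lists this corollary as an immediate consequence of Theorem~\ref{thm:GE}. The intended one-line argument uses the \emph{structural} half of part (d). Since every maximum matching $M$ arises from the construction there, $M$ consists of some $M_A$, near-perfect matchings inside the $D_j$'s, and a perfect matching of $G[C]$. If $D_i$ were touched by $M_A$, all of $D_i$ would be covered. Hence $D_i$ is untouched, and the near-perfect matching chosen in $G[D_i]$ must be the one missing $v$.

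You instead use only the \emph{numerical} part of (d), namely that every maximum matching misses exactly $r-|A|$ vertices. You combine this with the oddness of each $|D_j|$ and Corollary~\ref{cor:NBR}. Writing $t\le |A|$ for the number of components with an $M$-edge to $A$, you get at least $r-t\ge r-|A|$ uncovered vertices, and you recover the needed structure from the fact that this bound must be attained. This costs a few more lines. In return it shows the corollary depends only on the size of a maximum matching, and it effectively reproves the relevant part of the ``all maximum matchings'' description instead of citing it.

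One simplification is available. Your equality analysis already shows that components touched by $A$ contain no uncovered vertex at all, because the $r-|A|$ untouched components use up the whole budget. So $v\notin\bigcup M$ immediately rules out any vertex of $D_i$ being matched to $A$. Your parity argument about $D_i\setminus\{w\}$ is correct but unnecessary.
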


In addition to what we proved in the previous subsection, we will need another fact about the behavior of factor-critical graphs under contraction. Its proof uses the Gallai-Edmonds decomposition.
\begin{lemma} \label{lem:PM}
Let $G$ be a factor-critical graph, and let $X \subseteq V(G)$ be such that $G - X$ has a perfect matching. Then $G/X$ is factor-critical.
\end{lemma}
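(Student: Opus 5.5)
The plan is to verify the definition of factor-criticality for $G/X$ directly. We exhibit, for every vertex $w$ of $G/X$, a matching in $G/X$ that covers every vertex except $w$. The main tool is a symmetric-difference (alternating path) argument. The Gallai--Edmonds decomposition could be used to package the conclusion, but I expect the direct argument to suffice.

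First the preliminaries. Since $|V(G)|$ is odd and $G-X$ has a perfect matching, $|V(G)\setminus X|$ is even and $X\neq\emptyset$. Hence $G/X$ has $|V(G)\setminus X|+1$ vertices, an odd number, so the contraction is well defined. Fix a perfect matching $M_0$ of $G-X$.

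\emph{The vertex $v_X$.} The matching $M_0$, viewed in $G/X$, covers every vertex of $G/X$ except $v_X$, as required.

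\emph{A vertex $u\in V(G)\setminus X$.}
\begin{enumerate}
\item Since $G$ is factor-critical, choose a near-perfect matching $M_u$ of $G$ with $\bigcup M_u=V(G)\setminus\{u\}$, and consider $M_0\,\triangle\, M_u$ in $G$.
\item Classify degrees in $M_0\cup M_u$. Every vertex of $V(G)\setminus X$ other than $u$ is covered by both matchings. The vertex $u$ is covered only by $M_0$. Every vertex of $X$ is covered only by $M_u$, because $M_0$ lives in $G-X$.
\item Hence the component of $M_0\triangle M_u$ containing $u$ is an alternating path $P$. It starts at $u$ with an $M_0$-edge and cannot end at any vertex of $V(G)\setminus X$ other than $u$, since those have degree $2$ or (if the two matchings share the edge) are not on $P$ at all. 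So $P$ ends at some $x\in X$ with an $M_u$-edge.
\item The internal vertices of $P$ lie in $V(G)\setminus X$, because vertices of $X$ have degree at most $1$ in $M_0\cup M_u$.
\item Therefore $M_0\triangle P$ is a matching of $G$ covering exactly $\bigl(V(G)\setminus X\setminus\{u\}\bigr)\cup\{x\}$, and only one of its edges meets $X$, namely the last edge of $P$, say $yx$.
\item Replace $yx$ by $yv_X$, which is an edge of $G/X$ by the definition of contraction. This yields a matching of $G/X$ covering everything except $u$.
\end{enumerate}

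The main point needing care is step 3: showing that the alternating path from $u$ must terminate in $X$ rather than elsewhere, and that it meets $X$ only at its endpoint. Both follow from the degree count in step 2, and this is exactly what the hypothesis that $M_0$ avoids $X$ buys. If one prefers the paper's Gallai--Edmonds framing, the same argument shows every vertex of $G/X$ is dispensable, so $D=V(G/X)$ and $A=C=\emptyset$. Connectivity of $G/X$, inherited from $G$ since a factor-critical graph on more than one vertex is connected, then leaves a single $D_1$, and $G/X$ is factor-critical by Theorem~\ref{thm:GE}(a).
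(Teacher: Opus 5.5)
Your argument is correct, and it takes a genuinely different route from the paper.

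The paper argues structurally, via the Gallai--Edmonds decomposition of $G/X$:
\begin{itemize}
\item $v_X$ is dispensable, hence lies in some $D_1$.
\item Replacing $v_X$ by $X$ gives a partition $D_1^*, D_2, \dots, D_r, A, C$ of $V(G)$. The $D$-parts are odd, and all their outside neighbors lie in $A$ (Corollary~\ref{cor:NBR}).
\item If some $u\in A$ existed, a perfect matching of $G-u$ would need $r$ distinct partners in $A\setminus\{u\}$. This contradicts $|A|<r$ (Corollary~\ref{cor:BP}), so $A=\emptyset$.
\item Connectivity of $G$ then forces $r=1$ and $C=\emptyset$, and $G/X=G[D_1]$ is factor-critical by Theorem~\ref{thm:GE}(a).
\end{itemize}

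You instead check the definition directly. $M_0$ witnesses $v_X$. For $u\notin X$ you reroute $M_0$ along the component of $M_0\triangle M_u$ through $u$, and your degree count correctly identifies this component as a path meeting $X$ only at its far endpoint.

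Your route is more elementary and constructive: it needs no structure theory and gives the near-perfect matchings explicitly. It also shows that only a weak consequence of factor-criticality is used, namely that for each $u\notin X$ some matching of $G-u$ covers $V(G)\setminus(X\cup\{u\})$. The paper's route keeps the argument inside the Gallai--Edmonds toolkit it needs elsewhere anyway.

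Your closing Gallai--Edmonds remark is harmless but redundant. Once you have, for every vertex $w$ of $G/X$, a matching missing exactly $w$, factor-criticality holds by definition.
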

\begin{proof} Let $D = D_1 \cup D_2 \cup \cdots \cup D_r$, $A$, $C$ be the Gallai-Edmonds decomposition of $G/X$. Because $G - X$ has a perfect matching, so does $(G/X) - \{v_X\}$. It follows that $v_X$ is dispensable in $G/X$, and therefore $v_X \in D$. Assume w.l.o.g.\ that $v_X \in D_1$. Now let $D^*_1 = \bigl( D_1 \setminus \{v_X\}\bigr) \cup X$, and consider the partition $D^*_1, D_2, \ldots , D_r, A, C$ of $V(G)$. By Corollary~\ref{cor:NBR}, vertices in $D_i$, $i=1,2, \ldots , r$, have $G/X$-neighbors only in $D_i \cup A$. By the definition of contraction, the same remains true for $D^*_1, D_2, \ldots , D_r$ with respect to $G$-neighbors.

Assume first that $A \ne \emptyset$, and let $u \in A$. Since $G$ is factor-critical, there is a perfect matching in $G - \{u\}$. Because $D^*_1, D_2, \ldots , D_r$ all have odd cardinality, and have outer neighbors only in $A$, each of them must have a vertex matched to some vertex in $A \setminus \{u\}$. But by Corollary~\ref{cor:BP} we have $|A| < r$, so this is impossible.

Thus, we conclude that $A = \emptyset$. Now the sets $D^*_1, D_2, \ldots , D_r, C$ are disconnected from each other in $G$, yet $G$ must be a connected graph (otherwise it would not be factor-critical). It follows that $r=1$ and $C = \emptyset$. Returning to the Gallai-Edmonds decomposition of $G/X$, it reduces to just $D_1$, implying that $G/X$ itself is factor-critical.
\end{proof}

\subsection{Matchings in bipartite graphs}
We recall here some facts about matchings in bipartite graphs. Let $G$ be a bipartite graph with bipartition $(X, Y)$. For $S \subseteq X$ we denote by $N(S) \subseteq Y$ the set of vertices having at least one neighbor in $S$. We say that a set $U \subseteq V(G)$ is \emph{matchable} if there exists a matching $M$ in $G$ with $U \subseteq \bigcup M$. Hall's theorem says that $X$ is matchable if and only if $|N(S)| \ge |S|$ for every $S \subseteq X$. We will need the following well-known generalization of this theorem, attributed to Ore~\cite{Ore}. Define the \emph{deficiency} of $G$ with respect to $X$ to be \[ \mathrm{def}_X(G) = \max \{|S| - |N(S)|: S \subseteq X\}.\]
\begin{theorem}[{\cite{Ore}}] \label{thm:DEF}
Let $G$ be a bipartite graph with bipartition $(X, Y)$. Then $X$ has a matchable subset $X'$ of size $|X'| = |X| - \mathrm{def}_X(G)$.
\end{theorem}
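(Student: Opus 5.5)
The natural route is to reduce Theorem~\ref{thm:DEF} to Hall's theorem by adding a few auxiliary vertices. Write $d = \mathrm{def}_X(G)$. Taking $S = \emptyset$ shows $d \ge 0$. Form a new bipartite graph $G'$ with bipartition $(X, Y \cup Z)$, where $Z$ is a set of $d$ new vertices disjoint from $V(G)$. Each vertex of $Z$ is joined to every vertex of $X$, and all edges of $G$ are kept. The idea is that $Z$ exactly compensates for the worst Hall violation in $G$. We then apply Hall's theorem to $G'$ and discard the auxiliary edges.

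First we verify Hall's condition for $X$ in $G'$. Let $N'(S)$ denote the neighborhood of $S$ in $G'$.
\begin{itemize}
\item[(i)] If $S = \emptyset$, the condition holds trivially.
\item[(ii)] If $S \ne \emptyset$, then $N'(S) = N(S) \cup Z$, so $|N'(S)| = |N(S)| + d$. By the definition of deficiency, $|S| - |N(S)| \le d$, hence $|N'(S)| \ge |S|$.
\end{itemize}
Therefore Hall's theorem gives a matching $M'$ in $G'$ that covers all of $X$.

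Next let $M = M' \cap E(G)$, the edges of $M'$ that avoid $Z$. At most $|Z| = d$ edges of $M'$ meet $Z$, because $M'$ is a matching. Hence at least $|X| - d$ vertices of $X$ are matched by $M$, which is a matching in $G$. Taking $X'$ to be exactly $|X| - d$ of these vertices, covered by the corresponding subfamily of $M$, gives the required matchable subset.

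There is no real obstacle here. The only point needing care is that the bound $|N'(S)| \ge |S|$ is used only for nonempty $S$, where $Z \subseteq N'(S)$ holds; case (i) covers the empty set separately. If one wanted the converse as well (that no matchable set exceeds $|X| - d$), it follows at once: a maximizing $S$ can have at most $|N(S)|$ of its vertices matched, so any matching misses at least $d$ vertices of $X$. Only the existence direction is needed for the statement.
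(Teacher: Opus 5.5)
Your proof is correct. Adjoining $d=\mathrm{def}_X(G)$ dummy vertices on the $Y$ side, each joined to all of $X$, makes Hall's condition hold for every nonempty $S\subseteq X$. Discarding the at most $d$ matching edges that meet the dummies then leaves at least $|X|-d$ vertices of $X$ matched in $G$.

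The paper gives no proof of this statement; it simply cites it as a well-known result of Ore. Your dummy-vertex reduction to Hall's theorem is the standard argument. Only the existence direction you prove is what the paper actually uses, namely to extract the matchable sets $B$ in Section~3.
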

We will also use a result of Mendelsohn and Dulmage~\cite{MD} allowing to combine matchability of subsets of different sides.
\begin{theorem}[{\cite{MD}}] \label{thm:MD}
Let $G$ be a bipartite graph with bipartition $(X, Y)$. If each of the sets $X' \subseteq X$ and $Y' \subseteq Y$ is matchable, then $X' \cup Y'$ is matchable.
\end{theorem}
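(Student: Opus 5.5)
The plan is to prove Theorem~\ref{thm:MD} by a direct alternating-path argument. We fix a matching $M_1$ in $G$ with $X' \subseteq \bigcup M_1$ and a matching $M_2$ with $Y' \subseteq \bigcup M_2$. We then consider the subgraph $F$ of $G$ formed by the edges of $M_1 \cup M_2$. Every vertex has degree at most $2$ in $F$, so each connected component of $F$ is one of the following:
\begin{itemize}
\item[(i)] a single edge lying in $M_1 \cap M_2$;
\item[(ii)] an even cycle whose edges alternate between $M_1$ and $M_2$;
\item[(iii)] a path whose edges alternate between $M_1$ and $M_2$.
\end{itemize}
Every vertex of $X' \cup Y'$ lies in some component, since it is covered by $M_1$ or by $M_2$. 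It therefore suffices to choose, inside each component $K$, a matching $M_K \subseteq E(K)$ that covers every vertex of $K$ belonging to $X' \cup Y'$. The union of the $M_K$ over all components is then a matching, because the components are vertex-disjoint, and it covers $X' \cup Y'$.

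Components of type (i) and (ii) are handled by taking $M_K = M_1 \cap E(K)$, which is a perfect matching of $K$. For a path component $K$ with an odd number of edges, the two end edges belong to the same matching, say $M_\ell$. Then $M_\ell \cap E(K)$ is a perfect matching of $K$, and we take it as $M_K$.

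The only case needing care, and the one I expect to be the crux, is a path component $K$ with an even number of edges. Its two endpoints $u$ and $w$ lie on the same side of the bipartition, because the path has even length in a bipartite graph. Its end edges belong to different matchings, say $u$ is incident to an $M_1$-edge and $w$ to an $M_2$-edge. Since $w$ is an endpoint of a component of $F$, it is not covered by $M_1$. Similarly, $u$ is not covered by $M_2$. Now we split according to the side:
\begin{itemize}
\item If $u, w \in X$, then $w \notin X'$, because $X' \subseteq \bigcup M_1$. Also $w \notin Y'$, since $w \in X$. We take $M_K = M_1 \cap E(K)$, which covers every vertex of $K$ except $w$.
\item If $u, w \in Y$, then $u \notin Y'$, because $Y' \subseteq \bigcup M_2$, and $u \notin X'$, since $u \in Y$. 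We take $M_K = M_2 \cap E(K)$, which misses only $u$.
\end{itemize}
In both cases the single uncovered vertex lies outside $X' \cup Y'$. This is exactly where the hypothesis that $X'$ and $Y'$ lie on opposite sides is used. Collecting the matchings $M_K$ over all components then completes the proof.
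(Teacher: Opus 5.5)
Your proof is correct and complete. You decompose $M_1 \cup M_2$ into components, which are single edges of $M_1 \cap M_2$, alternating cycles and alternating paths. You then observe that only an even alternating path leaves a vertex uncovered, and that this vertex lies outside $X' \cup Y'$ because it sits on the side whose designated matching misses it. This is the standard proof of the Mendelsohn--Dulmage theorem. The step where you say an endpoint $w$ of an even path is not covered by $M_1$ is justified by your case (i): an edge of $M_1 \cap M_2$ forms its own component, so it cannot be the end edge of a longer path.

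The paper gives no proof of this statement and simply quotes it from Mendelsohn and Dulmage as a tool, so there is no argument to compare against; your proposal supplies the classical self-contained argument. It also shows slightly more than stated, namely that the covering matching can be taken inside $M_1 \cup M_2$.
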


\section{Proof of Theorem~\ref{thm:3/2}}
Let $H = (V, E)$ be a hypergraph of rank $3$. We use the notation \[ E_i = \{ e \in E: |e| = i\}, \,\, i = 1,2,3\]
to refer to the edges of each possible size. We also define the \emph{shadow} of $E_3$: \[ \partial E_3 = \{ e \setminus \{v\}: v \in e \in E_3\}.\] This is the family of subsets of size $2$ of edges of size $3$. We will consider the following graph associated with $H$: \[G_H = (V, E_2 \cup \partial E_3).\] For a function $f: E \to \mathbb{R}_+$ we write \[ |f|_- = \sum_{e \in E_1 \cup E_2} f(e).\] This is the total weight assigned by $f$ to edges of size less than $3$. We will deduce Theorem~\ref{thm:3/2} from the following proposition.
\begin{proposition} \label{pro:Cover}
Let $H = (V, E)$ be a hypergraph of rank $3$. Assume the following two conditions hold:
\begin{itemize}
    \item[(a)] The graph $G_H$ is factor-critical.
    \item[(b)] $H$ has an exact fractional edge cover $f: E \to \mathbb{R}_+$ such that $|f|_- < 1$.
\end{itemize}
Then $\rho(H) \le \frac{|V|-1}{2}$.
\end{proposition}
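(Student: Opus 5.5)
The plan is to reduce the proposition to a single structural claim. Since $G_H$ is factor-critical, $n=|V|$ is odd, and for every $v\in V$ there is a perfect matching $M_v$ of $G_H-\{v\}$. Each edge $xy\in M_v$ is either in $E_2$ or lies inside some triple of $E_3$, so choosing one hyperedge containing each matched pair covers $V\setminus\{v\}$ with $\frac{n-1}{2}$ edges of $H$. This falls short only at $v$. It therefore suffices to find a triple $e=vxy\in E_3$ such that $G_H-\{v,x,y\}$ has a perfect matching $M$. Then $M$ together with $e$ (replacing the pair $xy$) is a cover of size $1+\frac{n-3}{2}=\frac{n-1}{2}$.

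\textbf{Claim.} Under (a) and (b) there is $e\in E_3$ such that $G_H-e$ has a perfect matching.

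First we dispose of degenerate cases. If $n=1$, the only edge is the singleton and exactness forces $|f|_-=1$, contradicting (b). If $E_3=\emptyset$, exactness gives $n=2f(E_2)+f(E_1)\le 2|f|_-<2$, again a contradiction. More generally, exactness and (b) give
\[
n=3f(E_3)+2f(E_2)+f(E_1),\qquad\text{so}\qquad 3f(E_3)>n-2.
\]
Thus the triples carry almost all the weight, and we must exploit this.

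To prove the Claim, we argue by contradiction and proceed by induction on $n$ via Lemma~\ref{lem:Odd}. Suppose no triple is removable. Write $V=U_1\cup\cdots\cup U_{2k+1}$ as in Lemma~\ref{lem:Odd}, with each $G_H[U_i]$ factor-critical and edges only between cyclically consecutive parts.

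A triple $e$ meets at most two consecutive parts, since its shadow is a triangle in $G_H$. We distinguish two kinds of triples.
\begin{itemize}
\item If $e$ lies inside one part $U_i$, then $G_H-e$ has a perfect matching as soon as $G_H[U_i]-e$ does. Indeed, the other parts can be matched using the cycle structure, exactly as in the proof of the first direction of Lemma~\ref{lem:Odd}, with the roles of the parts shifted so that $U_i$ is the part with an even number of unmatched vertices handled internally.
\item If $e$ has two vertices in $U_i$ and one in $U_{i+1}$, a similar parity bookkeeping reduces removability to a condition inside $U_i$ and $U_{i+1}$.
\end{itemize}

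This suggests restricting $H$ to each $U_i$, and the induction hypothesis applies once the restricted hypergraph satisfies (a) and (b). Here (a) holds for $G_H[U_i]$. For (b) we restrict $f$ to edges inside $U_i$, and an edge crossing to a neighbor becomes an edge of smaller size in the restriction, which increases $|f|_-$. The counting step is to show that some part (or the whole cycle when all parts are singletons) still has an exact cover with small weight on edges of size less than three. We would sum the identities $|U_i|=\sum_{v\in U_i}\sum_{e\ni v}f(e)$ over $i$ and use $3f(E_3)>n-2$ to show that the total weight lost to crossings plus small edges is less than the number of parts. Hence some part must inherit the condition $|f|_-<1$ locally.

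The base case is when $G_H$ is an induced odd cycle. If its length is at least $5$, there are no triangles, so $E_3=\emptyset$, which is impossible. If its length is $3$, the triple is the whole vertex set and is trivially removable.

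The main obstacle is the inductive counting step. We must control how the weight of crossing triples and $E_2$-edges between consecutive $U_i$ distributes, so that the localized condition $|f|_-<1$ survives in some part, and then lift a removable triple from that part back to $G_H$. If this fails, the fallback is Gallai--Edmonds. For a fixed $v$, apply Theorem~\ref{thm:GE} to $G_H-\{v\}-x$ for $x$ a neighbor of $v$ via a triple, and use Lemma~\ref{lem:PM} to contract the matched parts. This would show that non-removability forces every triple through $v$ to straddle a Tutte barrier. Integrating this against $f$ should contradict $3f(E_3)>n-2$.
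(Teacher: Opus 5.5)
Your reduction is sound: a triple $e\in E_3$ with $G_H-e$ perfectly matchable gives a cover of size $\frac{n-1}{2}$. This one-triple-plus-matching shape is exactly the $G!H$ cover that Lemma~\ref{lem:G!H} eventually produces. The gap is in the proof of your Claim. The inductive step needs some part $U_i$ of a partition from Lemma~\ref{lem:Odd} whose restriction still satisfies (b), and that localization is false in general.

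Take $H=K^{(3)}_5$ with $f\equiv\frac16$, so $|f|_-=0$ and $G_H=K_5$. Any two parts are joined by an edge, so every partition as in Lemma~\ref{lem:Odd} has exactly three parts $\{u\},\{w\},U$ with $|U|=3$.
\begin{itemize}
\item A singleton part always has restricted $|f|_-=1$, because all the weight at that vertex becomes a singleton edge.
\item The part $U$ receives $9\cdot\frac16=\frac32$ from the nine triples meeting it but not contained in it.
\end{itemize}
So no part inherits (b), and $\sum_i|f_i|_-=\frac72$ exceeds the number of parts, contrary to the counting you hoped for, even though $3f(E_3)>n-2$ holds. The same example refutes ``a triple meets at most two consecutive parts'': with three parts all pairs are consecutive, and $\{u,w,y\}$ with $y\in U$ meets all three.

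The underlying problem is that in an odd-cycle-of-parts decomposition, triples can straddle adjacent parts with arbitrarily large weight, and nothing absorbs that weight. Your second bullet (``similar parity bookkeeping'') and the Gallai--Edmonds ``fallback'' state intentions, not arguments.

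What is missing is a decomposition in which the leaking weight can be charged somewhere. The paper takes the Gallai--Edmonds decomposition of a subgraph $G\subseteq G_H$ that contains all of $\partial E_3$, plus the $E_2$-edges at a vertex $x$ maximizing $|f|_{v^-}$. This choice has three consequences:
\begin{itemize}
\item No triple meets two distinct $D_i$, since its shadow would join them.
\item Every vertex of a triple lying outside $D_i$ is in $A$.
\item Exactness of $f$ on $N(S)\subseteq A$ then bounds the leaking weight, giving $|N(S)|>|S|-2+|f|_{x^-}$ (Claim~\ref{cl:SD}).
\end{itemize}
Together with minimality of a counterexample (components with $|f_i|_-<1$ are covered by induction), this forces $|A|=r-1$. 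It also lets $E_2$-edges be added one at a time until the graph becomes factor-critical. Only then does Lemma~\ref{lem:G!H} build the one-triple cover, by contracting induced odd cycles, using the auxiliary hypergraph $H'$ and Claim~\ref{cl:Out}.

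Note also that the paper's induction hypothesis is the weaker bound $\rho\le\frac{|V|-1}{2}$. In Claims~\ref{cl:No} and~\ref{cl:Ar}, covers of several components, each possibly containing its own triple, are glued together. If you keep your stronger one-triple statement as the induction hypothesis on a Gallai--Edmonds route, you would also have to merge several triples into one whenever $r-|A|\ge 3$. Your proposal does not address this.
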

Note that condition (a) implies that $|V|$ is odd. By taking a near-perfect matching in $G_H$ and one additional edge covering the missing vertex, we can cover $V$ by $\frac{|V|-1}{2} + 1$ edges of $G_H$. Since every edge of $G_H$ is either an edge of $H$ or is contained in one, it immediately follows that $\rho(H) \le \frac{|V|-1}{2} + 1$. Our task in proving Proposition~\ref{pro:Cover} is `just' to save $1$ in this upper bound, by using edges of size $3$ and the full power of the assumptions.

In Subsection~3.1 we will show how Theorem~\ref{thm:3/2} follows from Proposition~\ref{pro:Cover}. Then, in the next two subsections, we will prove Proposition~\ref{pro:Cover}.

\subsection{Proof of Theorem~\ref{thm:3/2} from Proposition~\ref{pro:Cover}}
Let $f: E \to \mathbb{R}_+$ be a fractional edge cover. For $v \in V$ we write \[ |f|_{v^+} = \sum_{e \ni v} f(e) - 1.\] Bearing in mind that $\sum_{e \ni v} f(e) \ge 1$ for every $v$, we think of $|f|_{v^+}$ as the extra coverage of $f$ at the vertex $v$. We further write \[ |f|_+ = \sum_{v \in V} |f|_{v^+}.\] This is the total extra coverage of $f$. Note that $f$ is exact if and only if $|f|_+ = 0$.

We first observe that if Proposition~\ref{pro:Cover} is true, then it remains true when condition (b) is relaxed to: \[ (b')\,\, H\,\, has\,\, a\,\, fractional\,\, edge\,\, cover\,\,  f: E \to \mathbb{R}_+\,\,  such\,\, that\,\,  |f|_- + |f|_+ < 1.\] Indeed, assume that $f$ satisfies $|f|_- + |f|_+ < 1$. For a vertex $v$ with $|f|_{v^+} > 0$, we reassign a total weight of $|f|_{v^+}$ from edges $e \in E_3$ containing $v$ to the corresponding $e \setminus \{v\}$. This reduces $|f|_+$ by $|f|_{v^+}$ while increasing $|f|_-$ by the same amount, thus preserving $|f|_- + |f|_+$. Repeating this for every $v$ with $|f|_{v^+} > 0$, we end up with a modified function $\tilde{f}$ such that $|\tilde{f}|_- = |f|_- + |f|_+ < 1$ and $|\tilde{f}|_+ = 0$. In the course of doing this, we may have created edges of size $2$ that were not in $H$, so let $\tilde{H}$ be the hypergraph obtained by adding those edges to $H$. Now $\tilde{f}$ is an exact fractional edge cover in $\tilde{H}$ satisfying condition (b). Note that $G_{\tilde{H}} = G_H$, so condition (a) still holds. Proposition~\ref{pro:Cover} implies that $\rho(\tilde{H}) \le \frac{|V|-1}{2}$. Because every edge of $\tilde{H}$ is contained in an edge of $H$, we conclude that $\rho(H) \le \frac{|V|-1}{2}$ as desired.

Assuming Proposition~\ref{pro:Cover}, we proceed to prove Theorem~\ref{thm:3/2}. Let $H = (V, E)$ be a hypergraph of rank $3$, and let $f: E \to \mathbb{R}_+$ be a fractional edge cover in $H$. We have to prove that $\rho(H) \le \frac{3}{2} |f|$.

We first prove this in the special case when $H$ is $3$-uniform. Consider the graph $G_H$ associated with $H$, which is $(V, \partial E)$ in this case. Let $D = D_1 \cup D_2 \cup \cdots \cup D_r$, $A$, $C$ be its Gallai-Edmonds decomposition. For each $i=1,2, \ldots , r$ we consider a graph and a hypergraph defined on the vertex set $D_i$. The graph is $G_H[D_i]$, it is factor-critical by Theorem~\ref{thm:GE}(a). The hypergraph is $H_{D_i}$, the restriction of $H$ to $D_i$, having the edge set
\begin{equation} \label{eq:rest}
E(H_{D_i}) = \{ e \cap D_i: e \in E, e\cap D_i \ne \emptyset\}.
\end{equation}
Observe that $G_{H_{D_i}} = G_H[D_i]$. We also define $f_i: E(H_{D_i}) \to \mathbb{R}_+$ by
\begin{equation} \label{eq:frest}
f_i(e') = \sum \{f(e): e \in E, e \cap D_i = e'\}.
\end{equation}
Note that $f_i$ is a fractional edge cover in $H_{D_i}$, and $|f_i|_{v^+} = |f|_{v^+}$ for every $v \in D_i$. Observe also that while $|f|_- = 0$ by the assumption that $H$ is $3$-uniform, $|f_i|_-$ may be positive: it is given by
\begin{equation} \label{eq:minus}
|f_i|_- = \sum \{ f(e): e \in E, e \cap D_i \ne \emptyset ,e \setminus D_i \ne \emptyset \}.
\end{equation}
We distinguish between those $D_i$ for which $|f_i|_- + |f_i|_+$ is high (at least $1$) and those for which it is low (less than $1$). We assume w.l.o.g.\ that
\begin{eqnarray*}
|f_i|_- + |f_i|_+ \ge 1\,\, & \textrm{for} & 1 \le i \le h, \\
|f_i|_- + |f_i|_+ < 1\,\, & \textrm{for} & h < i \le r,
\end{eqnarray*}
where $0 \le h \le r$. For every $h < i \le r$ we can apply Proposition~\ref{pro:Cover} in its generalized form (with conditions (a) and (b')) to conclude that $\rho(H_{D_i}) \le \frac{|D_i|-1}{2}$.

In order to handle $D_1, D_2, \ldots , D_h$, we consider the corresponding vertices $1, 2, \ldots , h$ in the bipartite graph $\Gamma_{G_H}$ defined in Corollary~\ref{cor:BP}. We claim that for every $S \subseteq \{1,2, \ldots , h\}$ we have
\begin{equation} \label{eq:def}
|S| - |N(S)| \le |f|_+,
\end{equation}
where $N(S) \subseteq A$ is the set of neighbors of vertices of $S$ in $\Gamma_{G_H}$. Indeed, given $i \in S$, for every $e \in E$ such that $e \cap D_i \ne \emptyset$, we have $e \setminus D_i \subseteq N(S)$ by Corollary~\ref{cor:NBR} and the definition of $\Gamma_{G_H}$. It follows from (\ref{eq:minus}) that each $i \in S$ contributes at least $|f_i|_-$ to the sum $\sum_{u \in N(S)} \sum_{e \ni u} f(e)$. Adding up these contributions we get \[ \sum_{i \in S} |f_i|_- \le \sum_{u \in N(S)} \sum_{e \ni u} f(e) = \sum_{u \in N(S)} (1 + |f|_{u^+}) = |N(S)| + \sum_{u \in N(S)} |f|_{u^+}.\] Furthermore, \[ \sum_{i \in S} |f_i|_+ = \sum_{i \in S} \sum_{v \in D_i} |f_i|_{v^+} = \sum_{v \in \bigcup _{i \in S} D_i} |f|_{v^+}.\] We conclude that \[ |S| \le \sum_{i \in S} (|f_i|_- + |f_i|_+) \le |N(S)| + \sum_{u \in N(S)} |f|_{u^+} +\sum_{v \in \bigcup_{i \in S} D_i} |f|_{v^+} \le |N(S)| + |f|_+,\] proving inequality~(\ref{eq:def}).

Since (\ref{eq:def}) holds for every $S \subseteq \{1,2, \ldots , h\}$, it follows from Theorem~\ref{thm:DEF} that $\{1,2, \ldots , h\}$ has a subset $B$ of size at least $h - |f|_+$ that is matchable in $\Gamma_{G_H}$. We also know from Corollary~\ref{cor:BP} that $A$ is matchable in $\Gamma_{G_H}$. Using Theorem~\ref{thm:MD} we obtain a matching $M$ in $\Gamma_{G_H}$ such that $A \cup B \subseteq \bigcup M$. Let $I \subseteq \{1,2, \ldots , r\}$ be the set of vertices on this side that are covered by $M$, and let $I^c = \{1,2, \ldots , r\} \setminus I$. As $B \subseteq I$, we have $|I^c \cap \{1,2, \ldots , h\}| \le |f|_+$.

Returning from $\Gamma_{G_H}$ to $G_H$ itself, we can replace each edge $iu$ in $M$ by an edge $v_iu$ of $G_H$, where $v_i$ is an arbitrarily chosen neighbor of $u$ in $D_i$. Let $M'$ be the obtained matching in $G_H$. By adding to $M'$ a perfect matching of $G_H[D_i] - \{v_i\}$ for each $i \in I$, and a perfect matching of $G_H[C]$, we get a matching $M''$ in $G_H$ that covers $V \setminus \bigcup_{i \in I^c} D_i$.

Now, we can find an edge cover in $H$ as follows. We replace each edge in $M''$ by an edge of $H$ containing it. This uses $\frac{1}{2} |V \setminus \bigcup_{i \in I^c} D_i|$ edges. For each $i \in I^c \cap \{1,2, \ldots , h\}$ there are $\frac{|D_i|-1}{2} + 1$ edges of $G_H$ that cover $D_i$, and we replace them by edges of $H$ containing them. This uses at most $\sum_{i \in I^c \cap \{1,2, \ldots , h\}} \frac{|D_i|+1}{2}$ edges. For each $i \in I^c \setminus \{1,2, \ldots , h\}$ there are $\frac{|D_i|-1}{2}$ edges of $H_{D_i}$ that cover $D_i$ (as shown above), and we replace them by edges of $H$ that contain them. This uses $\sum_{i \in I^c \setminus \{1,2, \ldots , h\}} \frac{|D_i|-1}{2}$ edges. Summing up, this gives
\begin{eqnarray*}
\rho(H) & \le & \frac{1}{2} |V \setminus \bigcup _{i \in I^c} D_i| + \sum_{i \in I^c \cap \{1,2, \ldots , h\}} \frac{|D_i|+1}{2} + \sum _{i \in I^c \setminus \{1,2, \ldots , h\}} \frac{|D_i|-1}{2} \\
& = & \frac{1}{2} \bigl( |V| + |I^c \cap \{1,2, \ldots , h\}| - |I^c \setminus \{1,2, \ldots , h\}|\bigr) \\
& \le & \frac{1}{2}(|V| + |f|_+) \\
& = & \frac{1}{2} \sum_{v \in V} (1 + |f|_{v^+}) \\
& = & \frac{1}{2} \sum_{v \in V} \sum_{e \ni v} f(e) \\
& = & \frac{1}{2} \sum_{e \in E} 3f(e) \\
& = & \frac{3}{2} |f|.
\end{eqnarray*}
This completes the proof of Theorem~\ref{thm:3/2} from Proposition~\ref{pro:Cover} in the case of $3$-uniform hypergraphs.

We treat the general rank $3$ case by reducing it to the $3$-uniform case. Let $H = (V, E)$ and a fractional edge cover $f: E \to \mathbb{R}_+$ be given. If there are two edges $e \subsetneq e'$ in $E$, we reassign any weight on $e$ to $e'$. Doing this for all such pairs, and removing from $E$ any edge with weight $0$, we may assume that there is no containment among edges in $E$, and all of them have positive weight. If there is a singleton edge $\{v\}$, then it is the only one containing $v$, so its weight must be $1$ (or more). We remove the vertex $v$ and the edge $\{v\}$, find a good edge cover in the remaining hypergraph, and add $\{v\}$ to get a good cover in $H$. Thus, we may assume that there is no singleton edge and there is at least one edge of size $2$ (otherwise $H$ is $3$-uniform).

Let $m > 0$ be the total weight of the edges of size $2$, and let $k$ be an integer so that $km \ge 1$. We form a new $3$-uniform hypergraph $kH$ on the vertex set $\bigcup _{i=1}^k V_i \cup \{w\}$, where each $V_i$ is a disjoint copy of $V$, and $w$ is a new vertex. In each $V_i$ we have copies of all edges of $H$ of size $3$, with their original weight. For each edge $uv$ of size $2$ in $H$, we place $k$ edges $u_iv_iw$ where $u_iv_i$ is its copy in $V_i$, and assign to each of them the weight of $uv$. Let $kf$ be the resulting weight function on $E(kH)$. Clearly, $kf$ is a fractional edge cover in $kH$, and $|kf| = k|f|$. Also $\rho(kH) \ge k\rho(H)$. Applying Theorem~\ref{thm:3/2} to $kH$ gives $\rho(kH) \le \frac{3}{2} |kf|$. We deduce that $k\rho(H) \le \frac{3}{2} k|f|$ and therefore $\rho(H) \le \frac{3}{2} |f|$ as required.

\subsection{The structure of a minimal counterexample to Proposition~\ref{pro:Cover}}
In view of the previous subsection, it remains to prove Proposition~\ref{pro:Cover}. Assuming that it is false, we consider a counterexample $H = (V, E)$ with $|V|$ as small as possible. In this subsection we prove Lemma~\ref{lem:Cons}. Roughly speaking, it identifies a subgraph of $G_H$ which is already factor-critical, and can be constructed in a particular way. Then, in the next subsection, we will obtain a contradiction by showing that such a structured counterexample cannot exist.
\begin{lemma} \label{lem:Cons}
Assume that $H = (V, E)$ is a counterexample to Proposition~\ref{pro:Cover}, and $|V|$ is minimal among such counterexamples. Then there exists a vertex $x \in V$ and there is some $t \ge 1$ and a sequence $G^{(1)}, G^{(2)}, \ldots , G^{(t)}$ of subgraphs $G^{(k)} = (V, E^{(k)})$ of $G_H$ satisfying:
\begin{itemize}
    \item[(a)] $E^{(1)} = \{e \in E_2: x \in e\} \cup \partial E_3$.
    \item[(b)] $G^{(t)}$ is factor-critical.
    \item[(c)] For each $1 \le k < t$: \begin{itemize}
        \item[(c1)] The Gallai-Edmonds decomposition of $G^{(k)}$ is of the form $D^{(k)} = D_1^{(k)} \cup D_2^{(k)} \cup \cdots \cup D_{r^{(k)}}^{(k)}$, $A^{(k)}$, $C^{(k)}$ where $|A^{(k)}| = r^{(k)} - 1 \ge 1$.
        \item[(c2)] $E^{(k+1)}$ is obtained from $E^{(k)}$ by adding one edge $e_k \in E_2 \setminus E^{(k)}$.
        \item[(c3)] $e_k$ has one endpoint in $D_{j^{(k)}}^{(k)}$ and the other outside it, where $1 \le j^{(k)} \le r^{(k)}$ is such that $|D_{j^{(k)}}^{(k)}| \ge 3$ and $x \notin D_{j^{(k)}}^{(k)}$.
        \end{itemize}
    \end{itemize}
\end{lemma}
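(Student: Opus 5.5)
We construct the sequence greedily and use the minimality of $H$ to force the required structure at each step. Fix $x$ (to be chosen later, e.g.\ a vertex with positive weight on some edge of $E_2$ through it, or arbitrary if $E_2=\emptyset$). Define $G^{(1)}$ by (a). Given $G^{(k)}$ that is not factor-critical, we will show that its Gallai--Edmonds decomposition satisfies (c1). We will then find an edge $e_k\in E_2\setminus E^{(k)}$ satisfying (c3) and set $E^{(k+1)}=E^{(k)}\cup\{e_k\}$. Since $E^{(k)}\subseteq E(G_H)$ grows strictly, the process terminates. It can only stop at a factor-critical graph, which gives (b). Every $G^{(k)}$ contains $\partial E_3$, so any edge of $G_H$ missing from $G^{(k)}$ lies in $E_2$, as (c2) requires.

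\emph{Step 1: $|A^{(k)}|=r^{(k)}-1$.} We imitate the argument of Subsection~3.1. For each component $D_i=D_i^{(k)}$ we form $H_{D_i}$ and $f_i$ as in (\ref{eq:rest}) and (\ref{eq:frest}); here $f_i$ is exact. The graph $G_{H_{D_i}}$ contains $G^{(k)}[D_i]$ as a spanning subgraph, so it is factor-critical. We call $D_i$ \emph{low} if $|f_i|_-<1$ and \emph{high} otherwise. If $D_i\ne V$ is low, minimality of $H$ gives $\rho(H_{D_i})\le (|D_i|-1)/2$.

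Let $S$ be a set of high components. Every edge of $H$ meeting $\bigcup_{i\in S}D_i$ and leaving it either has its outside part in $N(S)\subseteq A^{(k)}$, or is an $E_2$-edge absent from $G^{(k)}$. The total weight of the latter kind is at most $|f|_-<1$. Running the computation behind (\ref{eq:def}) with $|f|_+=0$, we expect $|S|-|N(S)|\le |f|_-<1$, hence $|S|\le|N(S)|$. Theorem~\ref{thm:DEF}, Theorem~\ref{thm:MD} and Corollary~\ref{cor:BP} then give a matching of $\Gamma_{G^{(k)}}$ covering $A^{(k)}$ and all high components.

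The unmatched components are therefore all low, and there are $r^{(k)}-|A^{(k)}|$ of them. This number is odd, since $|V|$ is odd and $G^{(k)}[C^{(k)}]$ has a perfect matching. Assembling covers exactly as in Subsection~3.1 yields $\rho(H)\le \bigl(|V|-(r^{(k)}-|A^{(k)}|)\bigr)/2\le (|V|-1)/2$. This contradicts the choice of $H$ unless some unmatched component equals $V$, which happens only when $G^{(k)}$ is factor-critical.

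Some care is needed here. The component containing $x$ may absorb the $E_2$-weight at $x$, and $C^{(k)}\neq\emptyset$ has to be handled. I would treat both by the same counting argument, possibly requiring that at most one unmatched component is not low and adjusting parity. The aim is to conclude $r^{(k)}-|A^{(k)}|=1$. Finally, $A^{(k)}=\emptyset$ would force $r^{(k)}=1$, so $G^{(k)}$ would be $D_1$ together with a part $C^{(k)}$ disconnected from it. I expect this case to be ruled out by the same cover construction, using that $G_H$ is connected. This gives (c1).

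\emph{Step 2: choosing $e_k$.} Since $G_H$ is factor-critical and $G^{(k)}$ is not, the proof of Lemma~\ref{lem:PM} shows that $G_H$ cannot respect the Gallai--Edmonds structure of $G^{(k)}$. Otherwise some $u\in A^{(k)}$ would give a contradiction with $|A^{(k)}|<r^{(k)}$ via Corollary~\ref{cor:BP}. Hence some edge of $E_2\setminus E^{(k)}$ leaves some $D_j^{(k)}$.

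The main obstacle is to guarantee that such an edge can be chosen leaving a component with $|D_j|\ge3$ and $x\notin D_j$. Edges at $x$ lie in $G^{(k)}$ already, so the offending edge does not contain $x$, but its $D$-endpoint could sit in the component of $x$ or in a singleton $\{v\}$. My first attempt is a weight argument on a singleton $\{v\}$ with $v\ne x$: its coverage $1$ splits between $3$-edges (whose other vertices lie in $A^{(k)}$) and $E_2$-edges. Summing over the $r^{(k)}$ components against $|A^{(k)}|=r^{(k)}-1$, as in Step~1, should show that if every missing edge touches only singletons or $x$'s component, then the low/high accounting again produces a cover of size $(|V|-1)/2$. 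I expect the choice of $x$ to be made precisely to make this work, for instance choosing $x$ to maximize the $E_2$-weight at it.
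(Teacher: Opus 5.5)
There is a genuine gap, and it is in Step~1. Your argument rests on the bound $|S|-|N(S)|\le |f|_-<1$ for sets $S$ of high components, and this bound is false.

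In the computation behind (\ref{eq:def}), each $D_i$ is charged with the whole weight of the $E_1\cup E_2$-edges meeting it. Internal ones are included, since they count in $|f_i|_-$. An edge of $E_2\setminus E^{(k)}$ joining two members of $S$ is charged twice. What the count actually gives is $|S|-|N(S)|\le 2|f|_- -|f|_{x^-}<2$ (edges at $x$ lie in $G^{(k)}$, so each meets only one component). That is, the deficiency is at most one, not zero.

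Here is a concrete instance. Take $V=\{1,\dots,6,a\}$, $E_3=\{123,456,12a,13a,23a,45a,46a,56a\}$ and $E_2=\{12,36\}$. Set $f=\frac25$ on $12,36,123,456$, $f(45a)=\frac12$, and $f=\frac1{10}$ on the other four triples.
\begin{itemize}
\item[(i)] $f$ is exact and $|f|_-=\frac45$.
\item[(ii)] $G_H$ is factor-critical, and $x=1$ is a maximizer of $|f|_{v^-}$.
\item[(iii)] $G^{(1)}$ is two copies of $K_4$ glued at $a$, so $D_1=\{1,2,3\}$, $D_2=\{4,5,6\}$, $A=\{a\}$.
\item[(iv)] $|f_1|_-=|f_2|_-=\frac{11}{10}$, yet $S=\{D_1,D_2\}$ has $N(S)=\{a\}$.
\end{itemize}
Your derivation never uses minimality, so this refutes it. It also proves too much: it would make every $G^{(k)}$ factor-critical. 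In the real situation minimality gives the opposite. The high components are \emph{not} matchable in $\Gamma_G$, since otherwise all unmatched components are low and you get a cover of size at most $\frac{|V|-1}{2}$. So the deficiency is exactly one. Your hedge ``at most one unmatched component is not low'' is precisely what must be proved and then used.

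From deficiency one, (c1) follows by a different accounting. By Theorem~\ref{thm:DEF} and Theorem~\ref{thm:MD} you can match $A$ together with all but one high component. If $r-|A|\ge 3$, the unmatched components are one high component, costing $\frac{|D_i|+1}{2}$, and at least two low ones, costing $\frac{|D_i|-1}{2}$ each. This again gives $\rho(H)\le\frac{|V|-1}{2}$. Also $|A|\ge 1$: a high component exists, and $|f|_-<1\le |f_i|_-$ forces a triple crossing it, whose shadow is a $G$-edge into $A$.

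Your Step~2 does not yet contain an argument; the singleton weight heuristic you sketch does not by itself produce $e_k$. The missing idea is to exploit a Hall-violating set.
\begin{itemize}
\item[(i)] Choose $x$ maximizing $|f|_{v^-}$, the weight of $E_1\cup E_2$-edges at $v$ (not just $E_2$).
\item[(ii)] If a singleton high component $\{v\}$ lies in $S$, each triple crossing it has two vertices in $N(S)$. This improves the bound by $1-|f|_{v^-}\ge 1-|f|_{x^-}$ and forces $|N(S)|\ge |S|$.
\item[(iii)] Hence some $S$ has $|N(S)|=|S|-1$ and $|S|\ge 2$, and all its components have size at least $3$.
\item[(iv)] Discard the member of $S$ containing $x$, or an arbitrary one. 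The remaining $s-1$ components together with $V\setminus\bigl(\bigcup D_i\cup N(S)\bigr)$ are $s$ odd sets with no $G^{(k)}$-edges between them.
\item[(v)] For $u\in N(S)$, a perfect matching of $G_H-u$ has only $s-2$ vertices of $N(S)$ available to absorb these $s$ odd sets. So it must contain an edge joining two of them.
\end{itemize}
That edge lies in $E_2\setminus E^{(k)}$ and leaves one of the retained $D_i$, which has size at least $3$ and does not contain $x$. It is the required $e_k$.
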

\begin{proof} Because $H$ is a counterexample to Proposition~\ref{pro:Cover} it satisfies its two conditions: $G_H$ is factor-critical, and we are given an exact fractional edge cover $f: E \to \mathbb{R}_+$ such that $|f|_- < 1$. Yet $\rho(H) > \frac{|V|-1}{2}$.

For a vertex $v \in V$ we write \[ E_{12}(v) = \{ e \in E_1 \cup E_2: v \in e\}\,\, \textrm { and } \,\,|f|_{v^-} = \sum_{e \in E_{12}(v)} f(e).\] We choose $x \in V$ to be a vertex that maximizes $|f|_{v^-}$ among all $v \in V$. If there are several maximizers, we choose one of them arbitrarily.

We define $E^{(1)}$ as in part (a) of the lemma. If $G^{(1)} = (V, E^{(1)})$ is factor-critical, we set $t=1$ and are done (note that part (c) is vacuous for $t=1$). Otherwise, we iteratively add edges from $E_2$ as indicated in part (c), until we obtain a graph $G^{(t)}$ that is factor-critical, and then we stop. The process must terminate with a factor-critical graph, because $G_H$ is factor-critical by assumption. What we need to show is that the iterations can be carried out. Namely, given $G^{(k)} = (V, E^{(k)})$ that is not factor-critical, with $E^{(1)} \subseteq E^{(k)} \subsetneq E(G_H)$, we have to show that the Gallai-Edmonds decomposition of $G^{(k)}$ has the form indicated in (c1), and there exists an edge $e_k \in E_2 \setminus E^{(k)}$ satisfying (c3).

From now on, we fix such $k$ and omit the superscript $(k)$. Thus, we consider a non-factor-critical graph $G$ with $E^{(1)} \subseteq E(G) \subsetneq E(G_H)$, and its Gallai-Edmonds decomposition $D = D_1 \cup D_2 \cup \cdots \cup D_r$, $A$, $C$. For $i=1,2, \ldots , r$ we define the restriction $H_{D_i}$ of $H$ to $D_i$ as in (\ref{eq:rest}), and $f_i: E(H_{D_i}) \to \mathbb{R}_+$ as in (\ref{eq:frest}). Since $f$ is an exact fractional edge cover in $H$, so is $f_i$ in $H_{D_i}$. We distinguish between those $D_i$ for which $|f_i|_-$ is high (at least $1$) and those for which it is low (less than $1$). We assume w.l.o.g.\ that
\begin{eqnarray*}
|f_i|_-  \ge 1\,\, & \textrm{for} & 1 \le i \le h, \\
|f_i|_-  < 1\,\, & \textrm{for} & h < i \le r,
\end{eqnarray*}
where $0 \le h \le r$.
Our proof proceeds via a series of claims.
\begin{claim} \label{cl:Low}
For every $h < i \le r$ there exists a family of at most $\frac{|D_i|-1}{2}$ edges of $H$ whose union covers $D_i$.
\end{claim}
\begin{proof} Fix $h < i \le r$, and consider $H_{D_i}$. As $G[D_i]$ is factor-critical, so is its supergraph $G_{H_{D_i}}$. Thus $H_{D_i}$ satisfies condition (a) of Proposition~\ref{pro:Cover}, and our assumption that $|f_i|_- < 1$ implies that condition (b) holds, too. Observe also that $|D_i| < |V|$, otherwise $G$ itself would be factor-critical, contradicting our assumption. By the minimality of $H$ as a counterexample to Proposition~\ref{pro:Cover}, we conclude that $\rho(H_{D_i}) \le \frac{|D_i|-1}{2}$. As every edge of $H_{D_i}$ is contained in an edge of $H$, this implies the claim.
\end{proof}
\begin{claim} \label{cl:Edge}
For every $1 \le i \le h$ there exists an edge of $G$ with one endpoint in $D_i$ and the other in $A$.
\end{claim}
\begin{proof} Fix $1 \le i \le h$. Since $|f|_- < 1$ but $|f_i|_- \ge 1$, there must exist an edge $e \in E_3$ such that $1 \le |e \cap D_i| < 3$. Such an edge of size $3$ contributes to $\partial E_3$ two edges of size $2$, each having an endpoint in $D_i$ and another outside it. As $\partial E_3 \subseteq E^{(1)} \subseteq E(G)$, these are also edges of $G$. By Corollary~\ref{cor:NBR}, the endpoint outside $D_i$ must be in $A$, proving the claim.
\end{proof}
We recall the bipartite graph $\Gamma_G$ defined in Corollary~\ref{cor:BP}. In terms of $\Gamma_G$, Claim~\ref{cl:Edge} means that every $1 \le i \le h$ has at least one neighbor in $A$. The next few claims give more information about $\Gamma_G$.
\begin{claim} \label{cl:No}
The set $\{1,2, \ldots , h\}$ is not matchable in $\Gamma_G$.
\end{claim}
\begin{proof} Suppose that $\{1,2, \ldots , h\}$ is matchable. By Corollary~\ref{cor:BP} $A$ is matchable. Hence Theorem~\ref{thm:MD} provides a matching $M$ in $\Gamma_G$ such that $\{1,2, \ldots , h\} \cup A \subseteq \bigcup M$. Let $I \subseteq \{1,2, \ldots , r\}$ be the set of vertices on this side that are covered by $M$, and let $I^c = \{1,2, \ldots , r\} \setminus I$. Then $I^c \subseteq \{h+1, \ldots , r\}$. Also, since $|A| < r$ by Corollary~\ref{cor:BP}, $I^c \ne \emptyset$.

Returning from $\Gamma_G$ to $G$, we can replace each edge $iu$ in $M$ by an edge $v_iu$ of $G$, where $v_i \in D_i$. Let $M'$ be the obtained matching in $G$. By adding to $M'$ a perfect matching of $G[D_i] - \{v_i\}$ for each $i \in I$, and a perfect matching of $G[C]$, we get a matching $M''$ in $G$ that covers $V \setminus \bigcup_{i \in I^c} D_i$. Now we replace every edge of $M''$ by an edge of $H$ containing it. For each $i \in I^c$ we find, using Claim~\ref{cl:Low}, a family of at most $\frac{|D_i|-1}{2}$ edges of $H$ whose union covers $D_i$. Altogether, since $I^c \ne \emptyset$, this yields a family of at most $\frac{|V|-1}{2}$ edges of $H$ covering $V$. This contradicts the assumption that $H$ is a counterexample to Proposition~\ref{pro:Cover}.
\end{proof}
In particular, Claim~\ref{cl:No} implies that $\{1,2, \ldots , h\} \ne \emptyset$. Using also Claim~\ref{cl:Edge}, we deduce that $h \ge 2$ and $|A| \ge 1$.
\begin{claim} \label{cl:SD}
Let $S \subseteq \{1,2, \ldots , h\}$. Then in $\Gamma_G$ we have \[ |N(S)| > |S| - 2 + |f|_{x^-}.\] In particular, $|N(S)| \ge |S| - 1$. Moreover, if for some $i \in S$ we have $|D_i| = 1$ then $|N(S)| \ge |S|$.
\end{claim}
\begin{proof} As $f$ is an exact fractional edge cover, we can write \[ |N(S)| = \sum_{u \in N(S)} \sum_{e \ni u} f(e) = \sum_{e \in E} |e \cap N(S)| f(e).\] For $i \in S$, we denote by $c_i$ the contribution of edges $e$ that intersect $D_i$ to the above sum. Note that no $e$ with $e \cap N(S) \ne \emptyset$ can intersect two distinct $D_i$'s, because then it would have to be of size $3$ and contain an edge of $G$ between distinct $D_i$'s. Thus
\begin{equation} \label{eq:nsci}
|N(S)| \ge \sum_{i \in S} c_i.
\end{equation}

In order to estimate $c_i$ from below, we write
\begin{eqnarray} \label{eq:twosums}
|f_i|_- & = & \sum \{ f(e): e \in E_3, 1 \le |e \cap D_i| \le 2\} \nonumber \\  & + & \sum \{ f(e): e \in E_1 \cup E_2, e \cap D_i \ne \emptyset\}.
\end{eqnarray}
Every edge $e \in E_3$ with $1 \le |e \cap D_i| \le 2$ contains at least one vertex in $N(S)$, hence contributes at least $f(e)$ to $c_i$. As $|f_i|_- \ge 1$, we obtain \[ c_i \ge 1 - \sum \{ f(e): e \in E_1 \cup E_2, e \cap D_i \ne \emptyset \}.\] Adding up these inequalities we get
\begin{eqnarray} \label{eq:cis}
\sum_{i \in S} c_i & \ge & \sum_{i \in S} \bigl( 1 - \sum \{ f(e): e \in E_1 \cup E_2, e \cap D_i \ne \emptyset \} \bigr) \nonumber \\
& = & |S| - \sum_{e \in E_1 \cup E_2} |\{ i \in S: e \cap D_i \ne \emptyset \}| f(e).
\end{eqnarray}
Clearly, an edge $e \in E_1 \cup E_2$ can intersect at most two $D_i$'s. Moreover, if $x \in e$ then $e$ can intersect at most one $D_i$: this is obvious when $e \in E_1$, and if $e \in E_2$ then by definition $e \in E^{(1)} \subseteq E(G)$ so it cannot connect two $D_i$'s. Therefore
\begin{eqnarray} \label{eq:e12}
\sum_{e \in E_1 \cup E_2} |\{ i \in S: e \cap D_i \ne \emptyset \}| f(e) & \le & \sum_{e \in E_1 \cup E_2} 2f(e) - \sum_{e \in E_{12}(x)} f(e) \nonumber \\
& = & 2|f|_- - |f|_{x^-} \nonumber \\
& < & 2 - |f|_{x^-}.
\end{eqnarray}
Putting together (\ref{eq:nsci}), (\ref{eq:cis}) and (\ref{eq:e12}) we obtain \[ |N(S)| > |S| - 2 + |f|_{x^-},\] proving the first part of the claim.

Suppose now that there is an $i \in S$ with $|D_i| = 1$, say $D_i = \{v\}$. For this $i$, each edge $e$ that appears in the first sum in (\ref{eq:twosums}) contains $v$ and two vertices in $N(S)$, hence contributes $2f(e)$ to $c_i$. This allows us to double the lower bound on this $c_i$: \[ c_i \ge 2 \bigl( 1 - \sum \{ f(e): e \in E_1 \cup E_2, e \cap D_i \ne \emptyset \}\bigr) = 2 (1 - |f|_{v^-}).\] Taking this into account, we can add $1 - |f|_{v^-}$ to the calculation above and get \[ |N(S)| > |S| - 2 + |f|_{x^-} + 1 - |f|_{v^-} \ge |S| - 1,\] where the last inequality follows from the choice of $x$ as maximizing $|f|_{v^-}$. It follows that $|N(S)| \ge |S|$ in this case, as claimed.
\end{proof}
\begin{claim} \label{cl:One}
There exists a subset $S \subseteq \{1,2, \ldots , h\}$ of size $|S| \ge 2$ such that $|N(S)| = |S| - 1$ and $|D_i| \ge 3$ for every $i \in S$.
\end{claim}
\begin{proof} By Claim~\ref{cl:SD} all subsets $S \subseteq \{1,2, \ldots , h\}$ satisfy $|N(S)| \ge |S| - 1$. If none of these inequalities holds as an equality, then by Hall's theorem $\{1,2, \ldots , h\}$ is matchable in $\Gamma_G$, contradicting Claim~\ref{cl:No}. Hence we can find an $S \subseteq \{1,2, \ldots , h\}$ such that $|N(S)| = |S| - 1$. Claim~\ref{cl:Edge} rules out the possibility $|S| = 1$, hence $|S| \ge 2$. The second part of Claim~\ref{cl:SD} precludes any $i \in S$ with $|D_i| = 1$.
\end{proof}
\begin{claim} \label{cl:Ar}
$|A| = r - 1.$
\end{claim}
\begin{proof} By Corollary~\ref{cor:BP} we know that $|A| < r$. Moreover, since $|V|$ and all $|D_i|$ are odd while $|C|$ is even, $|A|$ and $r$ must have different parities. Thus, if the claim is wrong then $r - |A| \ge 3$. Using Theorem~\ref{thm:DEF} and Claim~\ref{cl:SD}, we can find a subset $B \subseteq \{1,2, \ldots , h\}$ of size $h-1$ that is matchable in $\Gamma_G$. By Corollary~\ref{cor:BP} $A$ is matchable, hence Theorem~\ref{thm:MD} provides a matching $M$ in $\Gamma_G$ with $A \cup B \subseteq \bigcup M$. Let $I \subseteq \{1,2, \ldots , r\}$ be the set of vertices that $M$ covers on this side, and $I^c = \{1,2, \ldots , r\} \setminus I$. As $B \subseteq I$, we know that $I^c$ contains at most one $1 \le i \le h$ (actually exactly one such $i$, because $\{1,2, \ldots , h\}$ is not matchable by Claim~\ref{cl:No}). Because we assume $r - |A| \ge 3$, there are at least $2$ other members of $I^c$, and they are in $\{h+1, \ldots , r\}$.

Returning from $\Gamma_G$ to $G$, we can replace each edge $iu$ in $M$ by an edge $v_iu$ of $G$, where $v_i \in D_i$. Let $M'$ be the obtained matching in $G$. By adding to $M'$ a perfect matching of $G[D_i] - \{v_i\}$ for each $i \in I$, and a perfect matching of $G[C]$, we get a matching $M''$ in $G$ that covers $V \setminus \bigcup_{i \in I^c} D_i$. For the unique $D_i$ with $i \in I^c$, $1 \le i \le h$, we take $\frac{|D_i|+1}{2}$ edges of $G$ that cover it. All these edges, as well as those of $M''$, can be replaced by edges of $H$ containing them. There remain at least $2$ $D_i$'s to be covered, having $h < i \le r$. By Claim~\ref{cl:Low} each of them can be covered by at most $\frac{|D_i|-1}{2}$ edges of $H$. Altogether we get an edge cover in $H$ using at most $\frac{|V|-1}{2}$ edges, because we had just one $\frac{|D_i|+1}{2}$ term and at least two $\frac{|D_i|-1}{2}$ terms. This contradicts our assumption that $\rho(H) > \frac{|V|-1}{2}$.
\end{proof}
Claim~\ref{cl:Ar} confirms that the Gallai-Edmonds decomposition of $G$ has the form indicated in (c1): $|A| = r - 1$, and we already observed that $|A| \ge 1$. It remains to prove the existence of an edge $e_k \in E_2 \setminus E(G)$ satisfying (c3), that we can add to $G$ to get the next graph in our sequence. This is shown in the next, final claim.
\begin{claim} \label{cl:Ek}
Let $S \subseteq \{1,2, \ldots , h\}$ be a subset as guaranteed to exist in Claim~\ref{cl:One}. Then there exists an $i \in S$ such that $x \notin D_i$, and there is an edge $e_k$ in $E_2 \setminus E(G)$ that has exactly one endpoint in $D_i$.
\end{claim}
\begin{proof}
Let $|S| = s \ge 2$. The vertex $x$ belongs to at most one $D_j$, $j \in S$. Let $j$ be that member of $S$ if it exists, or an arbitrary member otherwise. Then $|S \setminus \{j\}| = s-1 \ge 1$, and for ease of notation we assume that $S \setminus \{j\} = \{1,2, \ldots , s-1\}$. Now consider the following $s$ disjoint subsets of $V$: \[ D_1, D_2, \ldots , D_{s-1}, V \setminus \bigl( \bigcup _{i=1}^{s-1} D_i \cup N(S) \bigr).\] First, observe that each of these sets has odd cardinality. To check this for the last one, note that $|N(S)| = s-1$. Thus, the parity of $\bigcup _{i=1}^{s-1} D_i \cup N(S)$ is that of $(s-1) + (s-1)$, hence even, while $|V|$ is odd. Next, note that $G$ has no edges between any two of these $s$ subsets. Indeed, any outside neighbors of vertices in one of the $D_i$'s lie in $N(S)$.

Now, take away some $u \in N(S)$. As $G_H$ is factor-critical, we can find a perfect matching $M$ in $G_H - \{u\}$. The matching $M$ covers all the vertices in our $s$ odd subsets, and the remaining $s-2$ vertices of $N(S) \setminus \{u\}$. Necessarily $M$ has at least one edge $e_k$ connecting two of our $s$ subsets. Such $e_k$ cannot be an edge of $G$, and in particular is not in $\partial E_3$, hence $e_k \in E_2 \setminus E(G)$. Of the two sets that $e_k$ connects, at least one is a $D_i$, and such $D_i$ satisfies our claim together with the edge $e_k$.
\end{proof}

Note that the statement of Claim~\ref{cl:Ek} suffices to satisfy the requirements of (c3), because $|D_i| \ge 3$ for all $i \in S$ by Claim~\ref{cl:One}.
\end{proof}

\subsection{No such structured counterexample to Proposition~\ref{pro:Cover} exists}
We begin this subsection with some terminology and notation. An \emph{edge-ordered graph} $G_<$ is a graph $G = (V, E)$ equipped with a linear order $<$ on its edge set $E$. If $e_1 < e_2$ we think of $e_1$ as having appeared before $e_2$. For $e \in E$, we denote by $G^{(e)} = (V, E^{(e)})$ the subgraph of $G$ formed before $e$, i.e., $E^{(e)} = \{ e' \in E: e' < e\}$.

Let $V$ be a vertex set and let $G$ and $H$ be a graph and a $3$-uniform hypergraph, respectively, on $V$. A family of disjoint edges, one of which is in $E(H)$ and the others in $E(G)$, whose union is $U \subseteq V$ will be called a $G!H$ \emph{cover} of $U$. Note that the number of edges in a $G!H$ cover of $U$ is $\frac{|U|-1}{2}$. We will say that $U$ is $G!H$ \emph{coverable} if such a family exists.

The following lemma will serve to prove that a counterexample to Proposition~\ref{pro:Cover}, structured as in Lemma~\ref{lem:Cons}, cannot exist.
\begin{lemma} \label{lem:G!H}
Let $V$ be a vertex set, $|V| > 1$. Let $G_<$ be a factor-critical edge-ordered graph on $V$, and let $H$ be a $3$-uniform hypergraph on $V$. Assume that every edge of $H$ contains at least two edges of $G$. Suppose that there exists a vertex $x \in V$ so that every edge $e \in E(G)$ satisfies one of the following conditions:
\begin{itemize}
    \item[($\alpha$)] $x \in e$.
    \item[($\beta$)] $e \in \partial E(H)$.
    \item[($\gamma$)] The Gallai-Edmonds decomposition of $G^{(e)}$ is of the form $D^{(e)} = D_1^{(e)} \cup D_2^{(e)} \cup \cdots \cup D_{r^{(e)}}^{(e)}$, $A^{(e)}$, $C^{(e)}$ where $|A^{(e)}| = r^{(e)} - 1 \ge 1$, and $e$ has one endpoint in $D_{j^{(e)}}^{(e)}$ and the other outside it, where $1 \le j^{(e)} \le r^{(e)}$ is such that $|D_{j^{(e)}}^{(e)}| \ge 3$ and $x \notin D_{j^{(e)}}^{(e)}$.
\end{itemize}
Then $V$ is $G!H$ coverable.
\end{lemma}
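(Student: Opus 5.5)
Induction on $|V|$. I would strengthen the statement so that it is closed under the reductions I want: the set $U \subseteq V$ to be $G!H$ covered may be any set inducing a factor-critical subgraph of $G$. The ordering, the hypergraph and the conditions ($\alpha$), ($\beta$), ($\gamma$) are inherited by $G[U]$ and by $H$ restricted to triples inside $U$. The base case is the smallest possible. If $|V| = 3$, then $G$ is a triangle. A triangle cannot consist only of ($\alpha$)-edges, and a ($\gamma$)-edge needs some $D_j^{(e)}$ of size at least $3$, which is impossible before the last edge. So some edge is in $\partial E(H)$, and the corresponding triple is all of $V$.

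For the inductive step I would look at the last edge $e$ in the order. There are three cases.

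\begin{itemize}
\item[Case ($\beta$).] Here $e \subseteq T \in E(H)$. I want to use $T$ as the single $H$-edge and cover $V \setminus T$ by a perfect matching of $G$. The first attempt is Lemma~\ref{lem:PM}: if $G - T$ has a perfect matching then $G/T$ is factor-critical, though that does not quite give what is needed. The more direct attempt uses factor-criticality: take a near-perfect matching missing a vertex of $T$, which matches the other two vertices of $T$, and try to alter it so those two vertices are matched to each other. This does not work in general. So in this case too I would fall back on the Lemma~\ref{lem:Odd} decomposition described next.

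\item[Case ($\gamma$).] Here $G' = G - e$ has a Gallai--Edmonds decomposition with $|A| = r-1$, and $e$ leaves $D_j$, where $|D_j| \ge 3$ and $x \notin D_j$. Since $G[D_j]$ is factor-critical and smaller, and its edges inherit the three conditions (with the same $x$), induction gives a $G!H$ cover of $D_j$. It remains to perfectly match $V \setminus D_j$ in $G$. By Theorem~\ref{thm:GE}(b) there is a matching of $A$ into distinct $D_i$ that avoids $D_j$; since $|A| = r-1$, it hits every $D_i$ with $i \ne j$. Adding near-perfect matchings inside those $D_i$ and a perfect matching of $C$ gives a perfect matching of $V \setminus D_j$. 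This case is clean.

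\item[Case ($\alpha$).] Here $x \in e$. If $G - e$ is still factor-critical, apply induction to it. Otherwise I would use Lemma~\ref{lem:Odd} to decompose $G$ into an odd cycle of factor-critical blocks $U_1, \dots, U_{2k+1}$. The aim is to apply induction to a block $U_i$ with $x \notin U_i$ (or with $x$ handled by ($\alpha$)-edges), take the $G!H$ cover there, and match the remaining blocks in pairs along the cycle. This is exactly the construction in the proof of Lemma~\ref{lem:Odd}, applied with the vertex $v$ removed from $U_i$.
\end{itemize}

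The obstacle is ensuring that some block actually contains an $H$-edge so the induction is nontrivial. A block that is a single vertex has no $G!H$ cover. I would therefore choose the decomposition to be minimal (coarsest), so that blocks are large. I would then argue by the last edge of the ordering inside a block: each block's edges inherit ($\alpha$)/($\beta$)/($\gamma$). When $x$ lies in a block, the edges at $x$ may fail to survive the restriction in the right way, and that is where I expect the difficulty.

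More generally, the main difficulty is showing that condition ($\gamma$), which is stated relative to the Gallai--Edmonds structure of the prefix graph $G^{(e)}$, remains valid when passing to an induced factor-critical subgraph. I would check that $D_j^{(e)} \cap U$ is again a factor-critical component in the restricted prefix graph. If that fails, I would instead run the induction on the prefix graphs $G^{(e)}$ themselves, covering $D_j$ as in Case ($\gamma$).
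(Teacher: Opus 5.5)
There is a genuine gap, and it is in the step you treat as routine. You assume that ($\alpha$), ($\beta$), ($\gamma$) pass to an induced factor-critical subgraph $G[U]$, with $H$ restricted to triples inside $U$. Condition ($\beta$) does not pass at all, because the triple witnessing $e\in\partial E(H)$ may leave $U$.

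Here is a concrete instance. Take $V=\{a,b,c,d,x\}$, let $G$ have edges ordered $ab<bc<ca<ax<xd<bd$, and let $E(H)=\{abx,bcd,acx\}$. Then:
\begin{itemize}
\item $G$ is factor-critical, and each triple contains two edges of $G$.
\item The triangle edges satisfy ($\beta$), and $ax$, $xd$ satisfy ($\alpha$).
\item The last edge $bd$ satisfies ($\gamma$), with $D^{(bd)}_1=\{a,b,c\}$, $D^{(bd)}_2=\{d\}$ and $A^{(bd)}=\{x\}$.
\end{itemize}
Your Case ($\gamma$) asks for a $G!H$ cover of $\{a,b,c\}$ by triples inside it, and there are none. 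Every $G!H$ cover of $V$ (e.g.\ $bcd$ with $ax$) uses a triple straddling the boundary of $D_j$. The block route of your Case ($\alpha$) fails in the same way, since the only partition as in Lemma~\ref{lem:Odd} is $\{x\},\{a,b,c\},\{d\}$.

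So ``cover one component or block internally, then match the rest'' cannot work, quite apart from the ($\gamma$)-inheritance problem you flagged. Several other parts are also left open:
\begin{itemize}
\item Case ($\beta$) is not resolved.
\item The configuration where every block other than the one containing $x$ is a singleton is not handled.
\item Even applying induction to $G-e$ requires each triple of $H$ to still contain two edges of $G-e$, and this can fail.
\end{itemize}
Your base case, and the perfect matching of $V\setminus D_j$ via Theorem~\ref{thm:GE}(b), are fine.

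The missing idea is never to restrict $H$, but to contract and \emph{enlarge} it. The paper argues by contradiction, assuming $V$ is not $G!H$ coverable. It contracts an induced odd cycle $O$ with $x\notin V(O)$, $E(O)\subseteq\partial E(H)$ and $G/V(O)$ factor-critical. It then defines $H'$ by adding a triple $u_1u_2v_O$ whenever $V(O)\cup\{u_1,u_2\}$ is $G!H$ coverable and the triple contains two edges of $G/V(O)$. These virtual triples are exactly what records straddling triples such as $bcd$ above.

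Because $G-V(O)$ has a perfect matching, $V(O)$ itself is not coverable. Claim~\ref{cl:Out} then turns every edge crossing $O$ into a ($\beta$)-edge of the contracted graph, and ($\beta$)-edges disjoint from $O$ stay ($\beta$) via $H'$. The all-singleton configuration is handled directly: the ($\beta$)-triple on $v_2v_3$ must meet $v_4$ or the block of $x$.

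For ($\gamma$), the paper uses the \emph{first} edge $\bar e$ satisfying only ($\gamma$), not the last, and takes $O$ inside $D^{(\bar e)}_{j^{(\bar e)}}$. This component stays inside a single component of every later prefix graph. Lemma~\ref{lem:PM} then shows that contracting $O$ commutes with the Gallai--Edmonds decompositions of all later prefixes, and that is what preserves ($\gamma$).
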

We first show how Lemma~\ref{lem:Cons} and Lemma~\ref{lem:G!H} combine to prove Proposition~\ref{pro:Cover}.

\vspace{10pt}

\noindent {\it Proof of  Proposition~\ref{pro:Cover} from Lemma~\ref{lem:Cons} and Lemma~\ref{lem:G!H}.} \,Assume, for the sake of contradiction, that Proposition~\ref{pro:Cover} is false. Then it has a minimal counterexample $H = (V, E)$. By Lemma~\ref{lem:Cons} there exists a vertex $x \in V$ and there is a sequence $G^{(1)}, G^{(2)}, \ldots , G^{(t)}$ of subgraphs of $G_H$ satisfying (a), (b) and (c) in that lemma.

We are going to apply Lemma~\ref{lem:G!H} to the graph $G^{(t)}$, which is factor-critical by (b), equipped with a suitable order $<$ on $E^{(t)}$, and the hypergraph $(V, E_3)$. The order $<$ corresponds to the construction of $G^{(t)}$: the edges in (a), namely $E^{(1)} = \{e \in E_2: x \in e\} \cup \partial E_3$ come first (in arbitrary order) and then, if $t > 1$, come the edges $e_1, e_2, \ldots , e_{t-1}$ added in (c), in this order. Every edge in $E_3$ contains three edges of $\partial E_3$, hence of $G^{(t)}$. The fact that, for the vertex $x$ given by Lemma~\ref{lem:Cons}, every edge $e \in E^{(t)}$ satisfies condition ($\alpha$), ($\beta$) or ($\gamma$) in Lemma~\ref{lem:G!H} follows from the construction. Indeed, if $e$ fails to satisfy ($\alpha$) or ($\beta$), then $e = e_k$ for some $1 \le k < t$. But then the graph $G^{(e)}$ in the notation of Lemma~\ref{lem:G!H} is exactly $G^{(k)}$ in the construction of Lemma~\ref{lem:Cons}, and condition ($\gamma$) for $e$ follows from condition (c) of the construction.

Thus, all the requirements in Lemma~\ref{lem:G!H} hold. The lemma guarantees the existence of a family of $\frac{|V|-1}{2}$ edges covering $V$, one of which is in $E_3$ and the others in $E^{(t)}$. Every edge in $E^{(t)}$ is in $E(G_H)$, hence contained in an edge of $H$. Replacing those edges by edges of $H$ containing them, we obtain an edge cover in $H$ of size $\frac{|V|-1}{2}$. This contradicts our choice of $H$ as a counterexample to Proposition~\ref{pro:Cover}. \qed

\vspace{10pt}

It remains to prove Lemma~\ref{lem:G!H}. The proof will be by induction on $|V|$, using contraction in the induction step. The following claim will be useful in this context, and we start with its statement and proof.
\begin{claim} \label{cl:Out}
Let $G$ and $H$ be a graph and a $3$-uniform hypergraph, respectively, on the same vertex set $V$. Assume that $G$ is factor-critical, and every edge of $H$ contains at least two edges of $G$. Let $O$ be an induced odd cycle in $G$ such that $V(O)$ is not $G!H$ coverable and $E(O) \subseteq \partial E(H)$. Suppose that $u_1v_1 \in E(G)$, $u_1 \notin V(O)$, $v_1 \in V(O)$. Then there exists $u_2v_2 \in E(G)$ such that $u_2 \notin V(O)$, $v_2 \in V(O)$, $u_2 \ne u_1$, and $V(O) \cup \{u_1,u_2\}$ is $G!H$ coverable.
\end{claim}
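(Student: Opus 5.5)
The plan is to construct the $G!H$ cover of $V(O)\cup\{u_1,u_2\}$ explicitly. It will consist of one triple of $H$ containing an edge of $O$, one edge of $G$ joining an outside vertex to $O$, and a perfect matching of the remaining path segments of $O$. Write $|V(O)|=2m+1$. For $v\in V(O)$ let $M(v)$ be the unique perfect matching of the even path $O-\{v\}$. Note that for $ab\in M(v)$ the graph $O-\{a,b,v\}$ is a union of even paths, so it has a perfect matching.

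First I collect two structural facts. (i) Let $T\in E(H)$ contain an edge $ab$ of $O$, and let $w$ be its third vertex. Then $w\notin V(O)$. Indeed, $T$ contains at least two edges of $G$, and $O$ is induced, so if $w\in V(O)$ these would be edges of $O$ and $T$ would be three consecutive vertices of $O$. Then $T$ together with a perfect matching of the even path $O-T$ would be a $G!H$ cover of $V(O)$, contrary to assumption. (ii) In the same situation, $w$ is $G$-adjacent to $a$ or to $b$, again because $T$ contains two $G$-edges. Since $E(O)\subseteq\partial E(H)$, every edge of $O$ lies in such a triple.

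Main step. Take $ab\in M(v_1)$ and a triple $T=\{a,b,w\}\in E(H)$. If $w\neq u_1$, set $u_2=w$ and let $v_2\in\{a,b\}$ be a $G$-neighbour of $w$, which exists by (ii). Then $T$, the edge $u_1v_1$, and a perfect matching of $O-\{a,b,v_1\}$ form a $G!H$ cover of $V(O)\cup\{u_1,u_2\}$. So I may assume that every triple through every edge of $M(v_1)$ has third vertex $u_1$. To get a second outside vertex, I use factor-criticality. $G-\{u_1\}$ has a perfect matching, and since $|V(O)|$ is odd, some vertex $v_2\in V(O)$ is matched to some $u_2\notin V(O)$ with $u_2\neq u_1$. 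If $M(v_1)\cap M(v_2)$ contains an edge $ab$, then the triple $\{a,b,u_1\}$, the edge $u_2v_2$, and a perfect matching of $O-\{a,b,v_2\}$ give the required cover. This covers in particular the case $v_1=v_2$.

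The obstacle is the remaining case $v_1\neq v_2$ with $M(v_1)\cap M(v_2)=\emptyset$. For instance, in a triangle $abc$ with $v_1=a$ and $v_2=b$, the only triple through $bc$ is $\{b,c,u_1\}$. Here I would exploit the extra flexibility in (ii): $u_1$ is adjacent to at least one vertex of every edge of $M(v_1)$. I would then re-choose which $O$-vertex $u_1$ is attached to, giving a new $v_1'$, so that either some edge of $M(v_1')$ has a triple whose third vertex is not $u_1$, returning to the first case, or $M(v_1')$ meets $M(v_2)$. Alternatively, I would apply the first case with the roles of $u_1v_1$ and $u_2v_2$ exchanged: triples through $M(v_2)$ whose third vertex is not $u_2$ give a cover containing $u_2$, though one still has to add $u_1$. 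I expect this parity and position bookkeeping on the odd cycle to be the fiddly part. I would first settle the triangle case by hand, and then extend the argument by induction along the cycle using the alternating structure of the $M(v)$.
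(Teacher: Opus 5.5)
Your facts (i) and (ii), the main step, and the case $M(v_1)\cap M(v_2)\neq\emptyset$ are all correct; they play the roles of the paper's Cases II and I. But the argument stops where the real difficulty lies. The residual case is left as a list of intentions, so as written the proposal has a genuine gap. That case can be pinned down: one checks that $M(v_1)\cap M(v_2)=\emptyset$ if and only if $v_1v_2\in E(O)$. So what remains is that $v_2$ is a cycle-neighbour of $v_1$ and every triple through an edge of $M(v_1)$ contains $u_1$.

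Your second suggestion does not help as stated. The edge $u_2v_2$ together with a triple through an edge of $M(v_2)$ whose third vertex is $w\neq u_2$ covers $V(O)\cup\{u_2,w\}$, which is useful only if $w=u_1$. Your first suggestion does work when $|V(O)|\ge 5$, but only via an argument you did not give. In that case $M(v_1)$ has an edge avoiding $v_2$ and both cycle-neighbours of $v_2$. Every triple through that edge contains $u_1$, so by (ii) $u_1$ is $G$-adjacent to one of its endpoints $v_1'$. Since $v_1'$ is neither $v_2$ nor a cycle-neighbour of $v_2$, rerunning your main step with $u_1v_1'$ ends either in the first case or with $M(v_1')\cap M(v_2)\neq\emptyset$.

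For the triangle, re-choosing alone does not always suffice, since every vertex of $O$ is $v_2$ or a neighbour of $v_2$. Write $O=v_1v_2c$. Suppose $u_1$ is $G$-adjacent to $v_1$ and $c$ but not to $v_2$, every triple through $v_2c$ or $v_1v_2$ contains $u_1$, and no triple through $cv_1$ does. This is consistent with all hypotheses: take $V=\{v_1,v_2,c,u_1,w\}$, let $G$ be the triangle plus $u_1v_1,u_1c,wc,wv_2$, and let $E(H)=\{\{v_2,c,u_1\},\{v_1,v_2,u_1\},\{c,v_1,w\}\}$. Then $G$ is factor-critical, and your perfect matching of $G-u_1$ may be $\{cv_1,wv_2\}$. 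None of your moves applies here.

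The missing idea is to let the third vertex $w$ of a triple through $cv_1$ be the outside vertex attached by a single $G$-edge at an endpoint of $cv_1$, while $u_1$ is placed in the triple of the neighbouring cycle edge. By (ii), $w$ is adjacent to $v_1$ or to $c$. Then $\{v_2,c,u_1\}$ with $wv_1$, or $\{v_1,v_2,u_1\}$ with $wc$, is the required cover. This exchange of which outside vertex goes into the triple and which is attached by an edge, decided by where each is $G$-adjacent, is precisely the paper's Case III. There, for adjacent cycle edges $xy,yv$ with third vertices $u_1$ and $u_2\neq u_1$, one swaps if $xu_1$ or $vu_2$ is an edge of $G$; otherwise both see $y$, and one uses the triple at the next edge. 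This is what your plan lacks.
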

\begin{proof} By assumption, for every edge $xy$ of $O$ there exists a vertex $z$ such that $xyz$ is an edge of $H$. We argue that $z$ cannot be a vertex of $O$. Indeed, if it is, then we consider two cases. If $x, y, z$ are consecutive on the cycle $O$ then using $xyz$ and a perfect matching of the rest of the cycle we get a $G!H$ cover of $V(O)$, contradicting our assumption. If $x, y, z$ are not consecutive then, by another assumption, either $xz$ or $yz$ is an edge of $G$ which is a chord of $O$, contradicting the fact that $O$ is induced. We conclude that for every edge $xy$ of $O$ there exists a vertex $z = z(xy) \notin V(O)$ such that $xyz$ is an edge of $H$.

Now, let $u_1v_1 \in E(G)$ with $u_1 \notin V(O)$, $v_1 \in V(O)$ be given as in the claim. In order to find an edge $u_2v_2 \in E(G)$ as required, we distinguish between three cases.

\vspace{5pt}

\noindent \underline{Case I}. $z(xy) = u_1$ for all $xy \in E(O)$.

As $G$ is factor-critical, there exists a perfect matching $M$ in $G - \{u_1\}$. Since $|V(O)|$ is odd, $M$ must contain an edge $u_2v_2$ with $u_2 \notin V(O)$, $v_2 \in V(O)$, $u_2 \ne u_1$. Let $xy$ be an edge of $O$ so that $x, y, v_2$ are consecutive. We get a $G!H$ cover of $V(O) \cup \{u_1,u_2\}$ by taking $xyu_1$, $u_2v_2$ and a perfect matching of $O - \{x, y, v_2\}$.

\vspace{5pt}

\noindent \underline{Case II}. $z(xy) \ne u_1$ for all $xy \in E(O)$.

Let $xy$ be an edge of $O$ so that $x, y, v_1$ are consecutive. Let $z(xy) = u_2 \ne u_1$. As $xyu_2$ is an edge of $H$, one of $xu_2$, $yu_2$ is an edge of $G$. We choose $v_2$ to be $x$ or $y$ accordingly. We get a $G!H$ cover of $V(O) \cup \{u_1, u_2\}$ by taking $xyu_2$, $u_1v_1$ and a perfect matching of $O - \{x, y, v_1\}$.

\vspace{5pt}

\noindent \underline{Case III}. Both $z(xy) = u_1$ and $z(xy) \ne u_1$ occur among edges $xy \in E(O)$.

In this case, we can find such occurrences at two adjacent edges of the cycle. Say $x, y, v$ are consecutive, $z(xy) = u_1$ and $z(yv) = u_2 \ne u_1$. If $xu_1 \in E(G)$ then we take $v_2$ to be $y$ or $v$, whichever of them satisfies $u_2v_2 \in E(G)$. We get a $G!H$ cover of $V(O) \cup \{u_1, u_2\}$ using $yvu_2$, $xu_1$ and a perfect matching of $O - \{x, y, v\}$. If $vu_2 \in E(G)$ then we take $v_2 = v$, and use $xyu_1$, $vu_2$ and a perfect matching of $O - \{x, y, v\}$ for a $G!H$ cover of $V(O) \cup \{u_1, u_2\}$.

The remaining possibility is that none of $xu_1$ and $vu_2$ is in $E(G)$. Then we know that $yu_1$ and $yu_2$ are both in $E(G)$. Now we consider the edge $vw$ of the cycle, where $w \ne y$ (possibly $w = x$ when $O$ is a triangle), and distinguish between two cases. If $z(vw) = u_1$ we take $v_2 = y$, and use $vwu_1$, $yu_2$ and a perfect matching of $O - \{y, v, w\}$ for a $G!H$ cover of $V(O) \cup \{u_1, u_2\}$. Finally, if $z(vw) = u_3 \ne u_1$ then we let $u_3$ play the role of $u_2$ in the claim, and $v$ or $w$ play the role of $v_2$. We get a $G!H$ cover of $V(O) \cup \{u_1, u_3\}$ by taking $vwu_3$, $yu_1$ and a perfect matching of $O - \{y, v, w\}$.
\end{proof}

\vspace{10pt}

\noindent {\it Proof of Lemma~\ref{lem:G!H}.} \,Let $V$, $G_<$, $H$, and $x \in V$ be as in the statement of the lemma. As $G$ is factor-critical and $|V| > 1$, let $U_1, U_2, \ldots , U_{2k+1}$ be a partition of $V$ as in Lemma~\ref{lem:Odd}. We first treat the special case in which both of the following hold:
\begin{itemize}
    \item Every edge $e \in E(G)$ satisfies condition ($\alpha$) or ($\beta$) in Lemma~\ref{lem:G!H}.
    \item Every $U_i$, except possibly the one containing $x$, is a singleton.
\end{itemize}
In this special case, we prove the lemma by explicitly constructing a $G!H$ cover of $V$.

Assume w.l.o.g.\ that $x \in U_1$. Let $U_i = \{v_i\}$ for $i=2,3, \ldots , 2k+1$. Consider the edge $v_2v_3 \in E(G)$. Because it does not contain $x$, it must satisfy condition ($\beta$). Thus, there exists a vertex $u$ such that $v_2v_3u \in E(H)$. Suppose first that $u = v_i$ for some $5 \le i \le 2k+1$. Since $v_2v_3u$ must contain at least two edges of $G$, either $v_2u$ or $v_3u$ is in $E(G)$. In either case, we get an edge between two non-cyclically adjacent $U_i$'s, contradicting the structure described in Lemma~\ref{lem:Odd}. It follows that $u = v_4$ or $u \in U_1$.

If $u = v_4$ we obtain a $G!H$ cover of $V$ by taking $v_2v_3u$, $v_5v_6, \ldots , v_{2k-1}v_{2k}$, $v_{2k+1}v$ for some $v \in U_1$, and a perfect matching in $G[U_1] - \{v\}$. If $u \in U_1$ we use $v_2v_3u$, $v_4v_5, \ldots , v_{2k}v_{2k+1}$, and a perfect matching in $G[U_1] - \{u\}$. This proves the lemma in the special case.

When the special case does not hold, there are two possibilities.

\vspace{5pt}

\noindent \underline{Possibility 1}. Every edge $e \in E(G)$ satisfies condition ($\alpha$) or ($\beta$) in Lemma~\ref{lem:G!H}, and there exists a $U_i$ such that $x \notin U_i$ and $|U_i| > 1$.

\vspace{5pt}

\noindent \underline{Possibility 2}. There exists an edge $e \in E(G)$ that satisfies only condition ($\gamma$) in Lemma~\ref{lem:G!H}.

\vspace{5pt}

We will handle these possibilities by induction on $|V|$. We begin with a general description of the argument, which is valid for both possibilities. Later on, we will fill in the details, some of which differ between the two possibilities.

We assume, for the sake of contradiction, that $V$ is not $G!H$ coverable. We choose an induced odd cycle $O$ in $G$ so that the following conditions hold:
\begin{equation} \label{eq:noxo}
x \notin V(O).
\end{equation}
\begin{equation} \label{eq:osh}
E(O) \subseteq \partial E(H).
\end{equation}
\begin{equation} \label{eq:cfc}
G/V(O) \textrm{ is factor-critical}.
\end{equation}
The particular way we choose such $O$ will be explained later. For ease of notation, we write $G' = (V', E')$ for the contracted graph $G/V(O)$, and denote the vertex formed in the contraction by $v_O \in V'$.

We also define a $3$-uniform hypergraph $H'$ on $V'$. It coincides with $H$ on $V' \setminus \{v_O\}$, and the edges containing $v_O$ are those $\{u_1, u_2, v_O\}$ satisfying:
\begin{itemize}
    \item[-] $V(O) \cup \{u_1, u_2\}$ is $G!H$ coverable.
    \item[-] $\{ u_1, u_2, v_O\}$ contains at least two edges of $G'$.
\end{itemize}

Suppose for the moment that $V'$, $G'_<$ (with a suitable order $<$ on $E'$), $H'$, and $x \in V'$ satisfy the assumptions of Lemma~\ref{lem:G!H}. Since $|V'| < |V|$ (because the contraction collapses at least three vertices into one), the induction hypothesis provides a $G'!H'$ cover of $V'$. From it, we can construct a $G!H$ cover of $V$ as follows. If $v_O$ belongs to the $H'$ edge in the cover of $V'$, we replace that edge $u_1u_2v_O$ by a $G!H$ cover of $V(O) \cup \{u_1, u_2\}$. If $v_O$ belongs to a $G'$ edge in the cover of $V'$, say $uv_O$, we replace it by $uv$ for some $v \in V(O)$ and add a perfect matching in $O - \{v\}$. In either case, $V$ turns out to be $G!H$ coverable and we are done.

Thus, it remains to indicate our choice of an induced odd cycle $O$ in $G$ so that (\ref{eq:noxo}), (\ref{eq:osh}) and (\ref{eq:cfc}) are satisfied, and to verify that under this choice, with a suitable order on $E'$, the assumptions of Lemma~\ref{lem:G!H} hold for $V'$, $G'_<$, $H'$, and $x \in V'$. We start with verifying the assumptions, which for the most part depends only on (\ref{eq:noxo}), (\ref{eq:osh}) and (\ref{eq:cfc}).

First, $|V'| > 1$ because $\{x, v_O\} \subseteq V'$. $G'$ is factor-critical by (\ref{eq:cfc}), and $H'$ is $3$-uniform by definition. The property that an edge of $H'$ contains at least two edges of $G'$ is inherited from the same property of $H$ with respect to $G$ when the edge of $H'$ does not contain $v_O$; and it is part of the definition of $H'$ when the edge of $H'$ contains $v_O$.

For an edge $e$ of $G$ that is not contained in $V(O)$, let us denote by $e'$ the corresponding edge of $G'$. That is, $e' = e$ if $e$ is disjoint from $V(O)$, and $e' = uv_O$ if $e = uv$ for some $u \notin V(O)$, $v \in V(O)$.

Using the fact that $e \in E(G)$ satisfies ($\alpha$), ($\beta$) or ($\gamma$) in the statement of the lemma, we would like to show that the same is true for $e' \in E(G')$ with respect to $H'$. Regarding condition ($\alpha$), this is immediate:
\begin{equation} \label{eq:con1}
x \in e \,\,\Longrightarrow\,\, x \in e'.
\end{equation}
Indeed, by (\ref{eq:noxo}) $x \notin V(O)$, so this follows from the definition of the contraction. For condition ($\beta$) the argument is more involved, and we start by showing the following:
\begin{equation} \label{eq:shad}
e = uv,\, u \notin V(O),\, v \in V(O)\,\,\Longrightarrow\,\, e' \in \partial E(H').
\end{equation}
Note that for such $e$, we assert that $e'$ satisfies ($\beta$) regardless of whether or not $e$ satisfies it. The proof is an application of Claim~\ref{cl:Out}. First, we verify that $G$, $H$, and $O$ satisfy all the assumptions of Claim~\ref{cl:Out}. The only assumption that requires some checking is that $V(O)$ is not $G!H$ coverable. Suppose it is. Because $G'$ is factor-critical by (\ref{eq:cfc}), we know that $G - V(O)$ has a perfect matching $M$. Adding $M$ to a $G!H$ cover of $V(O)$ gives a $G!H$ cover of $V$, contradicting our assumption that $V$ is not $G!H$ coverable.

Thus we can apply Claim~\ref{cl:Out} with the given edge $e = uv$ in the role of $u_1v_1$. We get another edge $u_2v_2 \in E(G)$ such that $u_2 \notin V(O)$, $v_2 \in V(O)$, $u_2 \ne u_1$, and $V(O) \cup \{u_1, u_2\}$ is $G!H$ coverable. By the definition of $H'$ we obtain that $u_1u_2v_O \in E(H')$; indeed, the second requirement in the definition holds because both $u_1v_O$ and $u_2v_O$ are edges of $G'$ obtained by contracting $u_1v_1$ and $u_2v_2$, respectively. Thus $e' = uv_O$, being contained in $u_1u_2v_O$ (recall that $u = u_1$), satisfies $e' \in \partial E(H')$, which proves (\ref{eq:shad}).

Next, we prove:
\begin{equation} \label{eq:con2}
e \nsubseteq V(O),\, e \in \partial E(H)\,\,\Longrightarrow\,\, e' \in \partial E(H').
\end{equation}
In view of (\ref{eq:shad}), it remains to prove this implication for edges $e$ disjoint from $V(O)$. Then we have $e = u_1u_2$ with $u_1, u_2 \notin V(O)$, and $e' = e$. By assumption, there exists a vertex $y \in V$ such that $u_1u_2y \in E(H)$. If $y \notin V(O)$ then $u_1u_2y \in E(H')$ and we are done. If $y \in V(O)$ then we claim that $u_1u_2v_O \in E(H')$. Indeed, $V(O) \cup \{u_1, u_2\}$ is $G!H$ coverable, by taking $u_1u_2y$ and a perfect matching in $O - \{y\}$. Moreover, since $u_1u_2y$ contains at least two edges of $G$, $u_1u_2v_O$ contains at least two edges of $G'$. Thus, by the definition of $H'$, $u_1u_2v_O \in E(H')$. As it contains $e' = u_1u_2$, this confirms that $e' \in \partial E(H')$.

So far we have seen that if $e$ satisfies condition ($\alpha$), respectively ($\beta$), then so does $e'$. We leave condition ($\gamma$) for later, and proceed to handle Possibility 1. In this case, every edge of $G$ satisfies ($\alpha$) or ($\beta$), so by the above every edge of $G'$ satisfies ($\alpha$) or ($\beta$). Thus, the assumptions of Lemma~\ref{lem:G!H} hold for $V'$, $G'_<$, $H'$, and $x \in V'$ regardless of the order $<$ (which is only relevant to condition ($\gamma$)). To complete the proof in this case, we just show that an induced odd cycle $O$ in $G$ can be chosen so that (\ref{eq:noxo}), (\ref{eq:osh}) and (\ref{eq:cfc}) are satisfied.

By the assumption of Possibility 1, we can find a part $U_i$ in the partition of $V$ provided by Lemma~\ref{lem:Odd} so that $x \notin U_i$ and $|U_i| > 1$. As $G[U_i]$ is factor-critical, Lemma~\ref{lem:Ind} guarantees the existence of an induced odd cycle $O$ in $G[U_i]$ such that $G[U_i]/V(O)$ is factor-critical. We claim that such $O$ satisfies (\ref{eq:noxo}), (\ref{eq:osh}) and (\ref{eq:cfc}). Indeed, (\ref{eq:noxo}) holds by the choice of $U_i$. Since every edge of $G$ satisfies ($\alpha$) or ($\beta$), but the edges of $O$ do not satisfy ($\alpha$), all of them satisfy ($\beta$) and (\ref{eq:osh}) holds. To verify (\ref{eq:cfc}), it suffices to check that $G/V(O)$ admits a partition as described in Lemma~\ref{lem:Odd}. Indeed, taking the partition $U_1, U_2, \ldots , U_{2k+1}$ corresponding to $G$ and replacing the $U_i$ that contains $V(O)$ by $\bigl( U_i \setminus V(O) \bigr) \cup \{v_O\}$ yields such a partition for $G/V(O)$. This completes the inductive proof of the lemma in the case when Possibility 1 holds.

From now on, we assume that Possibility 2 holds. Thus, there exists an edge $e \in E(G)$ that satisfies only condition ($\gamma$) in the lemma. We denote by $\bar{e}$ the first such edge according to the order $<$ on $E(G)$, and consider the Gallai-Edmonds decomposition of $G^{(\bar{e})}$, namely $D^{(\bar{e})} = D_1^{(\bar{e})} \cup D_2^{(\bar{e})} \cup \cdots \cup D_{r^{(\bar{e})}}^{(\bar{e})}$, $A^{(\bar{e})}$, $C^{(\bar{e})}$. By condition ($\gamma$), we know that $|A^{(\bar{e})}| = r^{(\bar{e})} - 1 \ge 1$, and there is a component $D_{j^{(\bar{e})}}^{(\bar{e})}$ such that $|D_{j^{(\bar{e})}}^{(\bar{e})}| \ge 3$, $x \notin D_{j^{(\bar{e})}}^{(\bar{e})}$, and $\bar{e}$ has one endpoint in $D_{j^{(\bar{e})}}^{(\bar{e})}$ and the other outside it.

We first prove:
\begin{equation} \label{eq:stay}
\bar{e} \le e \,\,\Longrightarrow\,\, D_{j^{(\bar{e})}}^{(\bar{e})} \subseteq D_k^{(e)} \textrm{ for some } 1 \le k \le r^{(e)}.
\end{equation}
Indeed, as $|A^{(\bar{e})}| = r^{(\bar{e})} - 1$, for every vertex in $D^{(\bar{e})}$ there is a near-perfect matching in $G^{(\bar{e})}$, hence also in $G^{(e)}$, avoiding that vertex. This implies that $D^{(\bar{e})} \subseteq D^{(e)}$, and because vertices in the same component of $D^{(\bar{e})}$ are still connected in $G^{(e)}$, they must lie in the same component of $D^{(e)}$, proving (\ref{eq:stay}).

As $G^{(\bar{e})}[D_{j^{(\bar{e})}}^{(\bar{e})}]$ is factor-critical, we can find by Lemma~\ref{lem:Ind} an induced odd cycle $\bar{O}$ in it such that $G^{(\bar{e})}[D_{j^{(\bar{e})}}^{(\bar{e})}]/V(\bar{O})$ is factor-critical. Note that $\bar{O}$ is induced in $G^{(\bar{e})}$ but is not necessarily induced in $G$. If it is, we take $O = \bar{O}$ in our proof. If it is not, then using Lemma~\ref{lem:Short}, we take $O$ to be an odd shortening of $\bar{O}$ which is induced in $G$. We proceed to verify that $O$ has the required properties.

Since $x \notin D_{j^{(\bar{e})}}^{(\bar{e})}$, (\ref{eq:noxo}) holds. To check (\ref{eq:osh}), we note that the edges of $O$ are either edges of $\bar{O}$ or chords of $\bar{O}$. In the former case, as $\bar{O}$ lies in $G^{(\bar{e})}$, they must satisfy ($\alpha$) or ($\beta$) by the minimality of $\bar{e}$. But ($\alpha$) is already excluded, so these edges satisfy ($\beta$): they are in $\partial E(H)$. In the latter case, if $e$ is a chord of $\bar{O}$ then $e \in E(G) \setminus E(G^{(\bar{e})})$, hence $\bar{e} \le e$. It follows from (\ref{eq:stay}) that $e \subseteq D_k^{(e)}$ for some $1 \le k \le r^{(e)}$. Thus $e$ does not satisfy condition ($\gamma$), and because ($\alpha$) is excluded, we deduce that $e$ satisfies ($\beta$). This concludes the verification of (\ref{eq:osh}).

In order to check (\ref{eq:cfc}), we first show:
\begin{equation} \label{eq:pm}
G^{(\bar{e})}[D_{j^{(\bar{e})}}^{(\bar{e})} \setminus V(O)] \textrm{ has a perfect matching}.
\end{equation}
Indeed, because $G^{(\bar{e})}[D_{j^{(\bar{e})}}^{(\bar{e})}]/V(\bar{O})$ is factor-critical, there is a perfect matching of $D_{j^{(\bar{e})}}^{(\bar{e})} \setminus V(\bar{O})$ in $G^{(\bar{e})}$. If $O = \bar{O}$ this is exactly what we need. If $O$ is an odd shortening of $\bar{O}$ formed upon replacing the odd paths $Q_1, Q_2, \ldots , Q_{\ell}$ by chords, we just need to add perfect matchings of the internal vertices of these odd paths. This proves (\ref{eq:pm}).

To verify (\ref{eq:cfc}), it suffices by Lemma~\ref{lem:PM} to show that $G - V(O)$ has a perfect matching. We show that already $G^{(\bar{e})} - V(O)$ has one, and do this in two parts: $V \setminus D_{j^{(\bar{e})}}^{(\bar{e})}$ has a perfect matching in $G^{(\bar{e})}$ due to the Gallai-Edmonds decomposition having $|A^{(\bar{e})}| = r^{(\bar{e})} - 1$, and $D_{j^{(\bar{e})}}^{(\bar{e})} \setminus V(O)$ has a perfect matching in $G^{(\bar{e})}$ by (\ref{eq:pm}). This completes the proof that $O$ satisfies (\ref{eq:noxo}), (\ref{eq:osh}) and (\ref{eq:cfc}).

We now define an order on $E' = E\bigl(G/V(O)\bigr)$. For this purpose, we write each edge in $E'$ as $e'$, where $e$ is the first edge of $G$ whose contraction gives this $e'$. We use the given order on $E(G)$ to order $E'$: \[ e'_1 < e'_2\,\, \Longleftrightarrow\,\,e_1 < e_2.\]
Note that under this definition of the order on $E'$, contraction and taking the subgraph of preceding edges commute: $G'^{(e')} = G^{(e)}/V(O)$.

It remains to complete the verification that the assumptions of the lemma hold for $V'$, $G'_<$, $H'$, and $x \in V'$; this will allow the inductive argument to take care of Possibility 2. Every edge $e$ of $G$ is known to satisfy ($\alpha$), ($\beta$) or ($\gamma$), and we have already shown in (\ref{eq:con1}) and (\ref{eq:con2}) that if it satisfies ($\alpha$) or ($\beta$) then so does $e'$. Thus, we may assume that $e \in E(G)$ satisfies only condition ($\gamma$). By the choice of $\bar{e}$, we have $\bar{e} \le e$. If $e$ has one vertex in $V(O)$ and the other outside it, then by (\ref{eq:shad}) $e'$ satisfies ($\beta$). Thus, we may assume that $e \cap V(O) = \emptyset$ and therefore $e' = e$.

We will show that in this case $e'$ satisfies ($\gamma$). For this we need to describe the Gallai-Edmonds decomposition of $G'^{(e')} = G^{(e)}/V(O)$. As $\bar{e} \le e$, we know from (\ref{eq:stay}) that $V(O) \subseteq D_{j^{(\bar{e})}}^{(\bar{e})} \subseteq D_k^{(e)}$ for some $1 \le k \le r^{(e)}$. We claim that the Gallai-Edmonds decomposition of $G^{(e)}/V(O)$ is obtained from that of $G^{(e)}$ upon replacing $D_k^{(e)}$ by $\bigl( D_k^{(e)} \setminus V(O)\bigr) \cup \{v_O\}$. To check this, we only need to verify that $G^{(e)}[D_k^{(e)}]/V(O)$ is factor-critical (the other structural requirements are not affected by the contraction). As $G^{(e)}[D_k^{(e)}]$ is factor-critical, by Lemma~\ref{lem:PM} it suffices to show that $D_k^{(e)} \setminus V(O)$ has a perfect matching in $G^{(e)}$. In fact, we show below that it has a perfect matching already in $G^{(\bar{e})}$, which is a subgraph of $G^{(e)}$.

Indeed, let $M$ be a near-perfect matching in $G^{(\bar{e})}$ that misses a vertex $v \in D_{j^{(\bar{e})}}^{(\bar{e})}$. By Corollary~\ref{cor:MM}, $M$ has a subset $M'$ with $\bigcup M' = D_{j^{(\bar{e})}}^{(\bar{e})} \setminus \{v\}$. Another application of this corollary, when viewing $M$ as a near-perfect matching in $G^{(e)}$, shows that $M$ has a subset $M''$ with $\bigcup M'' = D_k^{(e)} \setminus \{v\}$. Thus $\bigcup (M'' \setminus M') = D_k^{(e)} \setminus D_{j^{(\bar{e})}}^{(\bar{e})}$. Adding to $M'' \setminus M'$ a perfect matching in $G^{(\bar{e})}[D_{j^{(\bar{e})}}^{(\bar{e})} \setminus V(O)]$ which exists by (\ref{eq:pm}), we obtain a perfect matching of $D_k^{(e)} \setminus V(O)$ in $G^{(\bar{e})}$, as promised. This confirms that $G^{(e)}[D_k^{(e)}]/V(O)$ is factor-critical, and therefore the Gallai-Edmonds decomposition of $G'^{(e')}$ is as described above.

We know that $e$ satisfies condition ($\gamma$) with respect to the decomposition of $G^{(e)}$, and $e' = e$. In order to verify that $e'$ satisfies ($\gamma$) with respect to the decomposition of $G'^{(e')}$, we only need to check that after contracting $D_k^{(e)}$ to $\bigl( D_k^{(e)} \setminus V(O)\bigr) \cup \{v_O\}$, the $j(e)$ component still has more than one vertex. If $j(e) \ne k$ there is nothing to prove. If $j(e) = k$ then, after contraction, this component still contains one vertex of $e$ (which is disjoint from $V(O)$), as well as $v_O$. So indeed $e'$ satisfies condition ($\gamma$).

This completes the argument that under Possibility 2, every edge in $E(G')$ satisfies ($\alpha$), ($\beta$) or ($\gamma$). Thus, the inductive proof of the lemma works for both possibilities. This ends the proof of Lemma~\ref{lem:G!H}, hence also of Proposition~\ref{pro:Cover} and Theorem~\ref{thm:3/2}. \qed

\section{A Gale-Shapley type theorem for triples}
The well-known stable matching theorem of Gale and Shapley~\cite{GS} can be formulated in terms of bipartite graphs as follows. Consider a bipartite graph $G = (V, E)$, where every vertex $v$ has a linear order $\succeq_v$ on the set $E_v$ of edges incident to $v$. Here $vx \succeq_v vy$ means that $v$ prefers to be matched to $x$ than to $y$ (or $x=y$). A matching $M$ in $G$ dominates an edge $e = uv \in E$ if either $u$ or $v$ has an edge incident to it in $M$ that it weakly prefers to $e$. A matching is stable if it dominates every edge in $E$. The stable matching theorem says that every bipartite graph, with any system of preferences as above, has a stable matching. This theorem and related ones have been much studied and widely applied in practice, e.g., to job markets, school assignments, and matching of organ donors to patients.

One direction of study is to allow the sets that can be formed (the edges) to be larger, not just couples. This is modeled by a hypergraph $H = (V, E)$ and a \emph{preference system} $\bigl( \succeq_v\bigr)_{v \in V}$, where each $\succeq_v$ is a linear order on $E_v$, the set of edges in $E$ that contain $v$. A set of edges $\{e_1, e_2, \ldots , e_m\} \subseteq E$ \emph{dominates} an edge $e \in E$ at a vertex $v \in e$ if for some $1 \le i \le m$ we have $v \in e_i$ and $e_i \succeq_v e$. We say that $\{e_1, e_2, \ldots , e_m\}$ is \emph{stable} if it dominates every edge $e \in E$ at some $v \in e$.

When $H$ is a bipartite graph, this coincides with the standard model described above, and a stable matching is guaranteed to exist. Beyond bipartite graphs, however, this is no longer true. Simple examples without a stable matching are known already for non-bipartite graphs (``the roommates problem"~\cite{GS}) and for $3$-partite hypergraphs (``threesome matchings"~\cite{Al}).

One way to regain existence is to relax the notion of matching to its fractional version. A \emph{fractional matching} in $H = (V, E)$ is a function $f: E \to \mathbb{R}_+$ that satisfies $\sum_{e \ni v} f(e) \le 1$ for every vertex $v \in V$. Note that these inequalities are opposite to those required for a fractional edge cover. We say that a vertex $v$ is \emph{saturated} by $f$ if $\sum_{e \ni v} f(e) = 1$. Given a preference system for $H$, a fractional matching $f$ in $H$ \emph{dominates} an edge $e \in E$ at a vertex $v \in e$ if \[ \sum \{ f(e'): e' \in E_v, e' \succeq_v e\} = 1.\] Observe that this requires $v$ to be saturated by $f$, and all edges $e' \in E_v$ with $f(e') > 0$ to be weakly preferred to $e$ by $v$. We say that $f$ is \emph{stable} if it dominates every edge $e \in E$ at some $v \in e$. Note that if $f$ is $\{0,1\}$-valued, this means that the $1$-valued edges form a stable matching. The following theorem of Aharoni and Fleiner~\cite{AF} guarantees the existence of a stable fractional matching (in the special case of graphs, this is implicit in an earlier result of Tan~\cite{Tan}).
\begin{theorem}[{\cite{AF}}] \label{thm:AF}
For any hypergraph $H = (V, E)$ and preference system $\bigl( \succeq_v\bigr)_{v \in V}$ there exists a stable fractional matching.
\end{theorem}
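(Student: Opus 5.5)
The plan is to derive Theorem~\ref{thm:AF} from Scarf's lemma, in the form used by Aharoni and Fleiner. None of the earlier results of this paper are needed, since this is a purely linear-algebraic and combinatorial statement. The form of Scarf's lemma I would use is the following.

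Let $B$ be an $n \times m$ nonnegative real matrix whose first $n$ columns form the identity matrix, and let $b \in \mathbb{R}^n$ be a positive vector. Assume each row $i$ carries a linear order $\ge_i$ on the set of columns $j$ with $b_{ij} > 0$. Then there is a vector $z \ge 0$ with $Bz = b$ that is \emph{dominating}. This means that for every column $j$ there is a row $i$ with $b_{ij} > 0$ such that $(Bz)_i = b_i$, and $k \ge_i j$ for every column $k$ with $z_k > 0$ and $b_{ik} > 0$.

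\textbf{Step 1: the matrix.} Let $H=(V,E)$ and $(\succeq_v)_{v\in V}$ be given. Take rows indexed by $V$ and $b = \mathbf{1}$. The columns consist of $|V|$ slack columns, namely the identity, with column $s_v$ equal to the unit vector at $v$. They are followed by one column per edge $e \in E$, which is the incidence vector of $e$. The positive entries in row $v$ are then exactly the slack column $s_v$ and the columns of the edges in $E_v$.

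\textbf{Step 2: the orders.} In row $v$, order the edge columns by $\succeq_v$. Place $s_v$ strictly below all of them.

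\textbf{Step 3: the conclusion.} Apply Scarf's lemma to get a dominating $z$, and let $f$ be the restriction of $z$ to the edge columns. The equation $Bz = \mathbf{1}$ reads
\[
z_{s_v} + \sum_{e \ni v} f(e) = 1 \qquad \text{for every } v \in V.
\]
Since $z_{s_v} \ge 0$, this shows that $f$ is a fractional matching.

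Now fix an edge $e$. Dominance gives a row $v$ with $b_{ve} > 0$, that is, $v \in e$. Every support column with a positive entry in row $v$ is $\ge_v e$. The column $s_v$ lies below $e$ in row $v$, so we must have $z_{s_v} = 0$. The displayed equation then becomes $\sum_{e' \ni v} f(e') = 1$, and every $e' \in E_v$ with $f(e') > 0$ satisfies $e' \succeq_v e$. Hence
\[
\sum \{ f(e') : e' \in E_v,\ e' \succeq_v e \} = 1,
\]
which says that $f$ dominates $e$ at $v$. As this holds for every $e$, $f$ is stable.

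\textbf{Main obstacle.} The substantive step is Scarf's lemma itself. If it may not simply be quoted, I would prove it by Scarf's pivoting argument, assuming nondegeneracy after a small perturbation of $b$. The argument runs two alternating paths at once:
\begin{itemize}
\item a chain of feasible bases of $\{Bz = b,\ z \ge 0\}$, advanced by ordinary simplex pivots;
\item a chain of ``ordinal bases'', meaning $n$-sets of columns for which each row has a distinguished minimal column, advanced by ordinal pivots.
\end{itemize}
Start from the slack basis, with one column deliberately swapped in. One then checks that every $n$-set that is simultaneously almost a feasible basis and almost an ordinal basis has exactly two neighbours, except at the start and at the terminal sets. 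A parity argument then shows that the path must end at a set that is both a feasible basis and an ordinal basis, which is precisely a dominating $z$. The delicate parts are the ordinal pivot rule and the case where a row has only few positive entries, so that the order is only partial on the columns; I would handle the latter by extending each order by ranking zero-entry columns above everything, as Scarf does.
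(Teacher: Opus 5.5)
The paper does not prove this theorem; it quotes it from Aharoni and Fleiner. Your argument is precisely their derivation from Scarf's lemma: the vertex--edge incidence matrix with identity slack columns, and each vertex's own slack ranked below all of its edges. Your verification is correct: at the dominating vertex $v$ you get $z_{s_v}=0$, so $v$ is saturated and every $e'\in E_v$ with $f(e')>0$ satisfies $e'\succeq_v e$.

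One small fix is needed in your general statement of Scarf's lemma. You must assume that every column of $B$ is nonzero, equivalently that $\{z\ge 0: Bz=b\}$ is bounded, since a zero column can never be dominated. This hypothesis holds in your application because edges are non-empty.
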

Here we propose another relaxation of the notion of stable matching. Instead of looking at matchings, we consider arbitrary sets of edges, and want to guarantee the existence of a stable one whose size is small (in terms of the number of vertices and the edge size). We will obtain such a result for $3$-uniform hypergraphs by combining Theorem~\ref{thm:AF} and Corollary~\ref{cor:Ex}.

More precisely, we need a slight generalization of Corollary~\ref{cor:Ex}, which may be of independent interest. In the corollary, we assumed the existence of an exact fractional edge cover, which is the same as a fractional matching which saturates all the vertices. The following proposition is an adaptation of the corollary that handles fractional matchings which may not saturate all the vertices.
\begin{proposition} \label{pro:SAT}
Let $H = (V, E)$ be a $3$-uniform hypergraph with $|V| = n$. Let $f: E \to \mathbb{R}_+$ be a fractional matching, and let $S$ be the set of vertices saturated by $f$. Then there exists a set of edges $E' \subseteq \{ e \in E: f(e) > 0\}$ of size $|E'| \le \frac{n}{2}$ such that $S \subseteq \bigcup E'$.
\end{proposition}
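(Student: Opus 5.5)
The plan is to deduce Proposition~\ref{pro:SAT} directly from Theorem~\ref{thm:3/2}, applied to the restriction of the support of $f$ to the saturated set $S$. If $S = \emptyset$ we take $E' = \emptyset$, so assume $S \ne \emptyset$.

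First, define the hypergraph $H_S$ on the vertex set $S$ with edge set
\[ E(H_S) = \{ e \cap S : e \in E,\ f(e) > 0,\ e \cap S \ne \emptyset \}. \]
This is exactly the restriction construction (\ref{eq:rest}), but applied only to the support of $f$. Then $H_S$ has rank at most $3$. Every vertex of $S$ lies in some edge of $H_S$, because a saturated vertex lies in a support edge, so $H_S$ is a legitimate hypergraph in the sense of the paper. Next, define $f_S$ on $E(H_S)$ as in (\ref{eq:frest}), namely $f_S(e') = \sum \{ f(e) : f(e) > 0,\ e \cap S = e' \}$. For $v \in S$ we get $\sum_{e' \ni v} f_S(e') = \sum_{e \ni v} f(e) = 1$, since $v$ is saturated. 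Hence $f_S$ is a (indeed exact) fractional edge cover of $H_S$, and $\rho^*(H_S) \le |f_S|$.

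The key estimate bounds the size of $f_S$ using the matching constraints. Since each edge of $H$ has size $3$,
\[ |f_S| \le \sum_{e \in E} f(e) = \frac{1}{3} \sum_{v \in V} \sum_{e \ni v} f(e) \le \frac{1}{3} \sum_{v \in V} 1 = \frac{n}{3}, \]
where the last inequality is the fractional matching condition at every vertex (saturated or not). Theorem~\ref{thm:3/2} applied to $H_S$ then gives $\rho(H_S) \le \frac{3}{2} \rho^*(H_S) \le \frac{3}{2} \cdot \frac{n}{3} = \frac{n}{2}$.

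Finally, take an edge cover of $H_S$ of size $\rho(H_S)$. Replace each of its edges $e'$ by one edge $e \in E$ with $f(e) > 0$ and $e \cap S = e'$, which exists by the definition of $E(H_S)$. The resulting $E'$ consists of support edges of $f$, has $|E'| \le \frac{n}{2}$, and satisfies $S \subseteq \bigcup E'$.

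I do not expect a real obstacle, since the heavy lifting is in Theorem~\ref{thm:3/2}. The point needing care is to restrict to $S$ rather than use all of $V$. Otherwise the unsaturated vertices would violate the covering constraints, whereas the slack at those vertices is exactly what the inequality $|f| \le n/3$ absorbs. The other point is to allow edges of size less than $3$ in $H_S$, which is why the rank-$3$ form of Theorem~\ref{thm:3/2} is needed rather than Corollary~\ref{cor:Ex}.
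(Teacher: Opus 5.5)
Your argument is correct, but it follows a different route from the paper.

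The paper stays in the $3$-uniform, exact setting throughout:
it raises weights on triples of unsaturated vertices until at most two unsaturated vertices remain;
it then attaches a gadget of four new vertices $W$ (edges $uw_1w_2$, $uw_3w_4$ and the four triples inside $W$), so that the result is an exact fractional edge cover of a $3$-uniform hypergraph on $n+4$ vertices;
Corollary~\ref{cor:Ex} gives a cover of size at most $\frac{n+4}{2}$, and at least two of its edges are new (they are needed to cover $W$) and can be discarded.

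You instead shrink the vertex set to $S$. This sacrifices uniformity but keeps exactness, and you absorb the slack at unsaturated vertices in the estimate $|f_S| \le \sum_{e} f(e) \le \frac{n}{3}$. This requires the full rank-$3$ form of Theorem~\ref{thm:3/2}. If $H_S$ happens to have rank below $3$, the paper's reduction of the rank-$3$ case to the $3$-uniform case still applies, as does Lov\'asz's bound $1+\frac{1}{2}$.

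What the paper's route buys is that the proposition follows from the weakest form of the main result, Corollary~\ref{cor:Ex}, which matches its presentation as a slight generalization of that corollary. What yours buys is brevity, no auxiliary construction, and a stronger conclusion: $|E'| \le \frac{3}{2}\sum\{f(e): e \cap S \ne \emptyset\}$. This can be far below $\frac{n}{2}$ when little weight meets $S$.
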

\begin{proof} First, we remove from $E$ any edges $e$ with $f(e) = 0$. Next we describe, in three steps, a finite process in which we add new triples to $E$ which are disjoint from  $S$ and assign weights to them, or increase the weights of existing such edges. At the end of the process, we will be able to apply Corollary~\ref{cor:Ex} to the resulting hypergraph.

\vspace{5pt}

\noindent \underline{Step 1}. As long as there are at least $3$ unsaturated vertices in $V$, we pick three of them $u, v, w$, add $uvw$ to $E$ if it is not an edge already, and assign or increase its weight so that at least one of $u, v, w$ becomes saturated while the weight at the others does not exceed $1$.

\vspace{5pt}

\noindent \underline{Step 2}. Let $U$ be the set of vertices that remain unsaturated at the end of Step~1. Then $|U| \le 2$. For each $u \in U$, let $\alpha_u \le 1$ be the additional weight that $u$ needs in order to be saturated. We add to $V$ a set $W = \{w_1, w_2, w_3, w_4\}$ of $4$ new vertices. For each $u \in U$, we add new edges $uw_1w_2$ and $uw_3w_4$ with weight $\frac{\alpha_u}{2}$ each. Now all vertices in $V$ are saturated, and the weight at each $w_i$ is $\sum_{u \in U} \frac{\alpha_u}{2} \le 1$.

\vspace{5pt}

\noindent \underline{Step 3}. We add the $4$ triples contained in $W$ as new edges with weight $\frac{1}{3}(1 - \sum_{u \in U} \frac{\alpha_u}{2})$ each. Now all vertices in $V \cup W$ are saturated.

\vspace{5pt}

Applying Corollary~\ref{cor:Ex} to the resulting hypergraph, we find a set $E''$ of edges (original or new) that cover $V \cup W$, such that $|E''| \le \frac{n+4}{2}$. Because original edges do not contribute to covering $W$, there must be at least two new edges in $E''$. We remove the new edges from $E''$ and obtain $E' \subseteq \{e \in E: f(e) > 0\}$ of size $|E'| \le \frac{n}{2}$. We have $S \subseteq \bigcup E'$ because the new edges are disjoint from $S$.
\end{proof}
We are now ready to state and prove the main result of this section.
\begin{theorem} \label{thm:GSH}
For any $3$-uniform hypergraph $H = (V, E)$ and preference system $\bigl( \succeq_v\bigr)_{v \in V}$ there exists a stable set of edges $\{e_1, e_2, \ldots , e_m\} \subseteq E$ with $m \le \frac{|V|}{2}$.
\end{theorem}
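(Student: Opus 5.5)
The plan is to combine Theorem~\ref{thm:AF} with Proposition~\ref{pro:SAT}, as the text announces. By Theorem~\ref{thm:AF}, $H$ with the given preference system has a stable fractional matching $f$. Let $S$ be the set of vertices saturated by $f$. Apply Proposition~\ref{pro:SAT} to $f$. This gives a set $E' = \{e_1, \ldots , e_m\} \subseteq \{e \in E: f(e) > 0\}$ with $m \le \frac{|V|}{2}$ and $S \subseteq \bigcup E'$. It remains to check that $E'$ is stable.

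Fix an edge $e \in E$. Since $f$ is stable, there is a vertex $v \in e$ at which $f$ dominates $e$, meaning $\sum \{ f(e'): e' \in E_v, e' \succeq_v e\} = 1$. This forces two things. First, $v$ is saturated, so $v \in S$. Second, every $e' \in E_v$ with $f(e') > 0$ satisfies $e' \succeq_v e$, because the total weight at $v$ is exactly $1$ and all of it sits on edges weakly preferred to $e$.

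Since $v \in S \subseteq \bigcup E'$, some $e_i \in E'$ contains $v$. Because $E'$ uses only edges of positive weight, $f(e_i) > 0$, and therefore $e_i \succeq_v e$. Thus $E'$ dominates $e$ at $v$. As $e$ was arbitrary, $E'$ is stable.

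There is no real obstacle here. The one point needing care is the requirement in Proposition~\ref{pro:SAT} that the cover use only edges in the support of $f$; this is exactly what makes the domination transfer from $f$ to $E'$. One degenerate case: if $S = \emptyset$, then no edge is dominated by $f$ anywhere, so stability of $f$ forces $E = \emptyset$, and the empty set of edges works.
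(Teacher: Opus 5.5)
Your proof is correct and is essentially the paper's argument. You take a stable fractional matching from Theorem~\ref{thm:AF}, apply Proposition~\ref{pro:SAT} to cover the saturated set $S$ by at most $\frac{|V|}{2}$ support edges, and transfer domination at the saturated vertex $v$ to a chosen edge containing $v$; the case $S=\emptyset$ you treat separately cannot arise under the paper's convention that $V=\bigcup E$ is non-empty, but the remark is harmless.
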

\begin{proof} Let $f: E \to \mathbb{R}_+$ be a stable fractional matching, whose existence is guaranteed by Theorem~\ref{thm:AF}. Let $S$ be the set of vertices saturated by $f$. Applying Proposition~\ref{pro:SAT}, we can find a set of edges $\{e_1, e_2, \ldots , e_m\} \subseteq E$ with $m \le \frac{|V|}{2}$ such that $S \subseteq \bigcup_{i=1}^m e_i$ and $f(e_i) > 0$ for $i=1,2, \ldots ,m$. We claim that $\{e_1, e_2, \ldots , e_m\}$ is stable. Indeed, let $e \in E$. As $f$ is stable, it dominates $e$ at some $v \in e$. By definition, this means that $v \in S$ and all edges $e' \in E_v$ with $f(e') > 0$ satisfy $e' \succeq_v e$. Choosing $e_i$ such that $v \in e_i$, we have $e_i \succeq_v e$, which shows that $\{e_1, e_2, \ldots , e_m\}$ dominates $e$ at $v$. As $e$ was arbitrary, this proves that $\{e_1, e_2, \ldots , e_m\}$ is stable.
\end{proof}
The following example shows that Theorem~\ref{thm:GSH} is sharp.
\begin{example}[$K^{(3)}_4$ with cyclic preferences]
Let $H$ be the complete $3$-uniform hypergraph on $4$ vertices. We write the vertex set as $V = \{0,1,2,3\}$ and use addition modulo $4$. The preferences of each vertex $i$ are: \[ V \setminus \{i+1\} \succeq_i V \setminus \{i+2\} \succeq_i V \setminus \{i+3\}.\] No single edge forms a stable set. Indeed, the edge $V \setminus \{i\}$ does not dominate the edge $V \setminus \{i+3\}$, because the vertices $i+1$ and $i+2$ both prefer $V \setminus \{i+3\}$ to $V \setminus \{i\}$. Thus, the smallest size of a stable set of edges is $2$. More generally, we can take $k$ disjoint copies of this example. Then we have $4k$ vertices, and a stable set of edges must contain at least two edges from each copy, hence a total of at least $2k$ edges.
\end{example}
We remark that the analog of Theorem~\ref{thm:GSH} for graphs holds with the sharp bound $m \le \frac{2}{3}|V|$. This can be deduced quite easily from Tan~\cite{Tan}. We omit the details.

\section*{Acknowledgments}
We are grateful to Ron Aharoni and He Guo for helpful discussions and correspondence.

\end{document}